\newcommand{\reals}{\mathbb{R}}
\newcommand{\extreals}{\overline{\reals}}
\newcommand{\defeq}{\stackrel{\operatorname{def}}{=}}
\newcommand{\st}{\text{s.t.}}
\newcommand{\dom}{\operatorname{dom}}
\newcommand{\interior}{\operatorname{int}}
\newcommand{\nustrict}{\nu^\text{strict}}
\newcommand{\spark}{\operatorname{spark}}
\newtheorem{theorem}{Theorem}[section]
\newtheorem{lemma}[theorem]{Lemma}
\newtheorem{corollary}[theorem]{Corollary}
\begin{document}

\title{Safe Feature Elimination for Non-Negativity Constrained Convex Optimization}

\author{James Folberth\thanks{University of Colorado, james.folberth@colorado.edu (corresponding author)} \and Stephen Becker\thanks{University of Colorado, stephen.becker@colorado.edu}}

\date{July 24 2019}

\maketitle

\begin{abstract} 
Inspired by recent work on safe feature elimination for $1$-norm regularized least-squares, we develop strategies to eliminate features from convex optimization problems with non-negativity constraints.
Our strategy is safe in the sense that it will only remove features/coordinates from the problem when they are guaranteed to be zero at a solution.
To perform feature elimination we use an accurate, but not optimal, primal-dual feasible pair, making our methods robust and able to be used on ill-conditioned problems.
We supplement our feature elimination problem with a method to construct an accurate dual feasible point from an accurate primal feasible point; this allows us to use a first-order method to find an accurate primal feasible point, then use that point to construct an accurate dual feasible point and perform feature elimination.
Under reasonable conditions, our feature elimination strategy will eventually eliminate all zero features from the problem.
As an application of our methods we show how safe feature elimination can be used to robustly certify the uniqueness of non-negative least-squares (NNLS) problems.
We give numerical examples on a well-conditioned synthetic NNLS problem and a on set of 40000 extremely ill-conditioned NNLS problems arising in a microscopy application.

\end{abstract}

This is a pre-print of an article published in Journal of Optimization Theory and Applications. The final authenticated version is available online at: \url{https://doi.org/10.1007/s10957-019-01612-w}.

\section{Introduction}

There is an expanding body of work on safe feature elimination for $1$-norm regularized optimization problems, particularly for $1$-norm regularized least-squares (the lasso).
Safe feature elimination removes features/columns of the dictionary/observation matrix when they are \emph{guaranteed} not to be present in a solution.
El Ghaoui et al.'s influential work in this direction \cite{ghaoui2012safe} is based on using complementary slackness between primal and dual optimization problems to identify zero coordinates in a solution to the primal problem.
Complementary slackness implies that if the dual optimal point satisfies an inequality constraint strictly, then the corresponding primal optimal coordinate must be equal to zero in any primal optimal point (we will make this statement precise shortly).

Using duality to identify zero coordinates has been used before, of course; for instance duality has been used to eliminate features in linear programs \cite{thompson1966techniques}.
What is novel is that safe feature elimination (SAFE) strategies are designed to avoid the use of the exact dual optimal point, which may be very expensive to compute in practice.
SAFE strategies instead use an auxiliary dual feasible point to construct a compact set that is guaranteed to contain the dual optimal point.
If all points in this compact set satisfy a dual inequality constraint strictly, then the exact dual optimal point also satisfies the inequality strictly and we can safely eliminate the corresponding primal coordinate.
We refer the reader to \cite{fercoq2015mind} for a discussion of numerous such safe sets for the lasso and \cite{xiang2017screening} for a survey on both safe and unsafe feature elimination strategies for lasso problems.

In this paper we develop a SAFE strategy for non-negativity constrained convex optimization problems which uses an accurate, but non-optimal, primal-dual feasible pair.
This is similar to the SAFE strategy for the lasso proposed in \cite{fercoq2015mind} which is more robust than El Ghaoui et al.'s original in \cite{ghaoui2012safe}.
We show that under reasonable conditions, a sufficiently accurate primal-dual pair will eliminate all zero coordinates from the problem.

A recent technique in super-resolution fluorescence microscopy uses many tens of thousands of non-negative least-squares (NNLS) problems to form a super-resolved image.
Motivated by these problems we focus our efforts on the case where only an accurate primal feasible point is known, as is usually the case when using first-order methods to solve the primal.
To enable the use of SAFE we propose an efficient method to construct an accurate dual feasible point from a given primal feasible point.
We also show that the construction depends continuously on the given primal feasible point, meaning that as the primal feasible point converges to an optimal point (e.g., as one iterates a first-order method) so too does the dual, and hence SAFE will eliminate all zero features.

We apply our SAFE strategy to the task of robustly certifying the uniqueness of solutions to NNLS problems.
In a small synthetic numerical example we compare our method with an existing uniqueness sufficient condition that relies on a strong assumption on the structure of the data matrix.
The strong assumption is that the columns are in general linear position \eqref{super:eq:glp}, which can be checked only for very small matrices or if the matrix has a generating model of a certain form.
In a real-data numerical example of a much larger size we use our SAFE strategy to certify the uniqueness of reconstructed images from a microscopy application.
It is infeasible to check if the columns of the data matrix are in general linear position, so the existing uniqueness condition cannot be used.
We instead find an approximate solution (a reconstructed image) to the NNLS problem using an efficient gradient method and use our SAFE strategy to confirm that the exact reconstructed image is unique.

Constructing the dual feasible point and performing feature elimination costs about as much as a primal gradient evaluation, which is to say that it is not expensive.
Although we do not explore this direction in this work, the inexpensiveness of our SAFE strategy likely allows it to be used to decrease the cost of solving the primal problem with a first-order method, as has already been demonstrated for the lasso \cite{xiang2017screening}.

The rest of this paper is organized as follows.
In Section \ref{sec:prelim} we state a general non-negativity constrained primal problem, develop a dual problem, and state KKT optimality conditions.
Section \ref{sec:safe-basic} derives the general structure of a SAFE strategy using an accurate primal-dual feasible pair.
We also give a simple, but effective, instantiation of this strategy.
To enable SAFE to work with first-order methods, Section \ref{sec:dual-line-search} derives and analyses a dual line search that allows us to construct an accurate dual feasible point from an accurate primal feasible point.
Section \ref{sec:feature-elim-all-zero-features} gives the proof that our SAFE strategy eventually eliminates all zero features.
Sections \ref{sec:nnls-uniqueness-theory} and \ref{sec:safe-nnls-examples} discuss robustly certifying solution uniqueness for NNLS problems.


\section{Preliminaries}
\label{sec:prelim}

Let $\extreals \defeq \reals \cup \{\pm \infty\}$ be the set of extended real values and $\reals^n_+ = \{x\in\reals^n \,:\, x\ge 0\}$ be the non-negative orthant.
We denote the standard inner product on $\reals^n$ by both $\langle x,y\rangle$ and $x^Ty$, and the induced norm by $\|x\|=\sqrt{x^Tx}$.
The convex conjugate of a function $f$ is defined via
$$
f^\ast(y) \defeq \sup_{x} \langle y, x\rangle - f(x).
$$

\noindent We denote domain of a function $f$ by $\dom{f}$ and the $i$th coordinate of a vector $x$ by $x_i$ or $\{x\}_i$.
Except for the convex conjugate, we use a superscript $\ast$ to denote the value of a quantity at an optimum, e.g., $p^\ast$ for the optimal value of a primal optimization problem.

We consider a general optimization problem involving a convex objective $f$ subject to a non-negativity constraint on the optimization variables:
\begin{equation}
    \label{eq:generic-primal}
    \begin{array}{ll}
        \min_x & f(Ax)\\
        \st & x \ge 0.
    \end{array}
\end{equation}

\noindent For example, this generic problem structure captures non-negative least-squares (NNLS) with ${f(z) = \frac{1}{2}\|z - b\|^2}$.
We assume the following throughout the paper:
\begin{itemize}
    \item $f:\reals^m\to\overline{\reals}$ is a proper, convex, extended value function
    \item $f$ has globally $L$-Lipschitz continuous gradient with $L>0$
    \item $A$ is a real ${m\times n}$ matrix with full rank
    \item $\interior{\dom f} \cap A\reals^n_+$ is non-empty 
\end{itemize}

We begin by deriving a dual problem to \eqref{eq:generic-primal}.
The problem \eqref{eq:generic-primal} fits naturally into the framework of Fenchel-Rockafellar duality (though one can use Lagrange duality to find the same dual problem; see \cite{boyd2004convex}).
See the text \cite{beck2017first} for a self-contained introduction or \cite{bauschke2017convex} for a thorough treatment.
We can write \eqref{eq:generic-primal} as
$$
p^\ast = \min_x f(Ax) + \iota_{\reals^n_+}(x)
$$

\noindent where $\iota_{\reals^n_+}$ is the indicator function for the non-negative orthant, i.e. $\iota_{\reals^n_+}(x)$ is $0$ if $x\ge 0$ and ${+\infty}$ otherwise.

We can directly write the dual problem as
$$
\max_\nu -f^\ast(\nu) - (\iota_{\reals^n_+})^\ast(-A^T\nu).
$$

\noindent The conjugate of $\iota_{\reals^n_+}$ is readily found to be $\iota_{\reals^n_-}$, the indicator for the non-positive orthant.
Writing this as a constraint on $A^T\nu$, we have the dual problem
\begin{equation}
    \label{eq:generic-dual}
    \begin{array}{lll}
        d^\ast =& \max_\nu & g(\nu)\\
        &         \st & A^T\nu \ge 0,
    \end{array}
\end{equation}

\noindent where we have defined the dual objective ${g(\nu)\defeq -f^\ast(\nu)}$.
Since we assumed $f$ to be proper and convex, $f^\ast$ is proper and convex.
Further, by assuming $f$ has $L$-Lipschitz continuous gradient, the ``conjugate correspondence theorem'' (Theorem 5.26 of \cite{beck2017first}) implies that $g = -f^\ast$ is $1/L$-strongly concave.
The strong concavity of $g$ implies that the dual optimal point $\nu^\ast$ exists and is unique.
It also provides us with the bound
\begin{equation}
    \label{eq:generic-dual-strongly-concave}
    \frac{1}{2L}\|\nu - \nu^\ast\|^2 \le g(\nu^\ast) - g(\nu) \quad \forall \nu\in\dom g.
\end{equation}

\noindent We will use this bound as a fundamental building block for our feature elimination procedure in Section \ref{sec:safe-basic}.

Observe that Slater's condition holds for the primal problem \eqref{eq:generic-primal}.
This implies that strong duality holds, so that the primal optimal value $p^\ast$ and the dual optimal value $d^\ast$ are equal.
Note that Slater's condition holding for the primal problem also shows that the dual optimal value is attained (a $\nu^\ast$ exists that achieves $d^\ast = g(\nu^\ast)$) \cite{boyd2004convex}.
Furthermore, Slater's condition holds for the dual problem \eqref{eq:generic-dual}, which implies that the primal optimal value is attained and that the KKT conditions are necessary and sufficient for primal and dual optimal points.

The KKT conditions for the primal problem \eqref{eq:generic-primal} can be written as
\begin{align}
    A^T\nabla f(Ax) - A^T\nu &= 0 \label{eq:kkt-first-order}\\
    x &\ge 0 \label{eq:kkt-primal-feas}\\
    A^T\nu &\ge 0 \label{eq:kkt-dual-feas}\\
    x_i \{A^T\nu\}_i &= 0 \quad \forall i=1,\ldots,n. \label{eq:kkt-cs}
\end{align}


\section{Safe Feature Elimination}
\label{sec:safe-basic}

Let $x^\ast$ be a (primal) optimal point of \eqref{eq:generic-primal} and $\nu^\ast$ the (dual) optimal point of \eqref{eq:generic-dual}.
Let $a_i$ be the $i$th column/feature of the matrix $A$.
The complementary slackness condition \eqref{eq:kkt-cs} implies that if ${\{A^T\nu^\ast\}_i > 0}$, then $x_i^\ast = 0$.
The key idea of safe feature elimination is that if we can certify that ${\{A^T\nu^\ast\}_i = \langle a_i, \nu^\ast \rangle > 0}$, then we can guarantee that $x_i^\ast = 0$.
This allows us to eliminate the $i$th column of $A$, $a_i$, from the problem with a \emph{guarantee} that it will not be present in a solution.
What remains is to robustly determine for each column $a_i$ if ${\langle a_i, \nu^\ast \rangle > 0}$ without knowledge of the exact solutions $\nu^\ast$ or $x^\ast$.

Observe that we do not require the precise value of ${\langle a_i, \nu^\ast \rangle}$; we merely need to certify that ${\langle a_i, \nu^\ast \rangle}$ is strictly positive to certify $x_i^\ast = 0$.
This allows us to avoid the apparent need for the exact solution $\nu^\ast$.
Suppose we have a set of dual points $N$ that is guaranteed to contain $\nu^\ast$.
We then find a lower bound for ${\langle a_i, \nu^\ast\rangle}$ by solving the ``feature elimination subproblem''
\begin{equation}
    \label{eq:feat-elim-subproblem}
    \begin{array}{ll}
        \min_\nu & \langle a_i, \nu\rangle\\
        \st & \nu\in N.
    \end{array}
\end{equation}

\noindent That $N$ contains $\nu^\ast$ makes \eqref{eq:feat-elim-subproblem} safe.
The optimal value of \eqref{eq:feat-elim-subproblem} is guaranteed to be no larger than ${\langle a_i, \nu^\ast \rangle}$, so that if the optimal value is strictly positive we can certify that ${\langle a_i, \nu^\ast \rangle > 0}$.
The feature elimination subproblem \eqref{eq:feat-elim-subproblem} tests for elimination of the single feature $a_i$, so to test for feature elimination on all of $A$ we simply solve \eqref{eq:feat-elim-subproblem} for each column of $A$.

We now construct a simple, but effective, search set $N$ without the use of any exact solutions.
Let us assume that we have access to both a primal feasible point $\hat{x}$ and a dual feasible point $\hat{\nu}$, neither of which are assumed to be optimal.
This gives the duality gap ${\epsilon = f(A\hat{x}) - g(\hat{\nu})}$.
Since strong duality holds, the duality gap $\epsilon$ will shrink to zero as $\hat{x}$ and $\hat{\nu}$ become increasingly accurate.
Using the strong-concavity bound \eqref{eq:generic-dual-strongly-concave}, we have
\begin{equation}
    \label{eq:strong-concavity-bound}
\frac{1}{2L}\|\hat{\nu} - \nu^\ast\|^2 \le g(\nu^\ast) - g(\hat{\nu}).
\end{equation}

\noindent Since strong duality holds, ${g(\nu^\ast) = f(Ax^\ast)}$, from which we see
$$
g(\nu^\ast) - g(\hat{\nu}) = f(Ax^\ast) - g(\hat{\nu}) \le f(A\hat{x}) - g(\hat{\nu}) = \epsilon.
$$

\noindent Combining these gives us a bound on the distance from $\hat{\nu}$ to $\nu^\ast$ in terms of the duality gap $\epsilon$.
We therefore define the search set $N$ to be the set of all points satisfying this bound: $N \defeq {\{ \nu \,:\, \|\hat{\nu} - \nu\|^2 \le 2 L \epsilon \}}$.
As desired, $N$ is guaranteed to contain $\nu^\ast$, but is constructed using only the feasible points $\hat{x}$ and $\hat{\nu}$.

The associated feature elimination subproblem is
\begin{equation}
    \label{eq:feat-elim-strong-concavity}
    \begin{array}{ll}
        \min_\nu & \langle a_i, \nu \rangle\\
        \st & \|\nu - \hat{\nu}\|^2 \le 2L\epsilon.
    \end{array}
\end{equation}

\noindent The problem \eqref{eq:feat-elim-strong-concavity} has a linear objective and the constraint set is a ball of radius $\sqrt{2L\epsilon}$ centered at $\hat{\nu}$.
See Figure \ref{fig:feat-elim-strong-concavity} for a diagram of the dual geometry for this problem.
Taking the search set $N$ to be a ball as we have done is very similar to the GAP SAFE sphere test of \cite{fercoq2015mind}.

\begin{figure}[h!]
    \centering
    \includegraphics[width=0.7\textwidth]{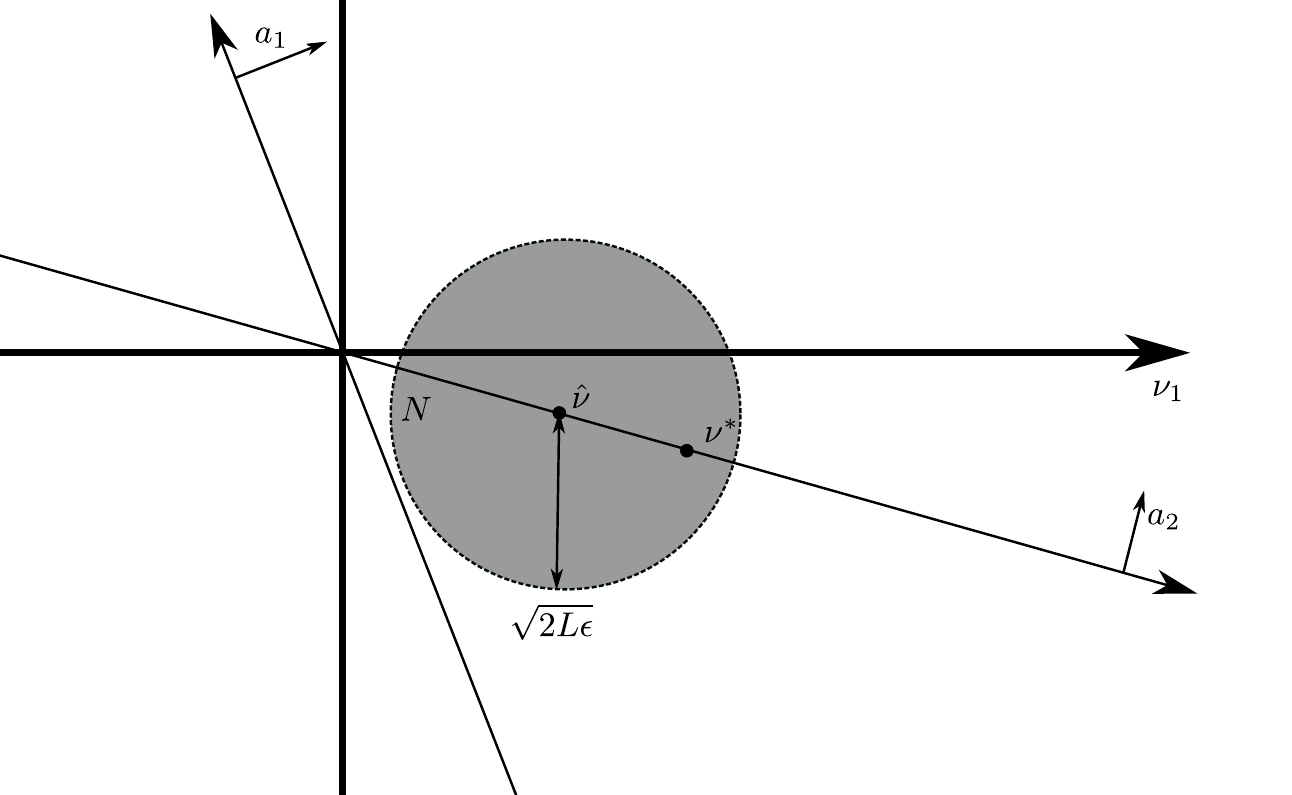}
    \caption{Dual geometry of the feature elimination subproblem \eqref{eq:feat-elim-strong-concavity}.
             The hyperplanes $\langle a_1,\nu\rangle = 0$ and $\langle a_2,\nu\rangle = 0$ are drawn, with the dual feasible set $\{\nu\,:\,A^T\nu\ge 0\}$ extending toward the upper right.
             The dual optimal point $\nu^\ast$ is guaranteed to be the search set $N$, which is a ball of radius $\sqrt{2L\epsilon}$ centered at $\hat{\nu}$.
             Since $\langle a_1,\nu\rangle > 0$ for all $\nu\in N$, the feature elimination subproblem \eqref{eq:feat-elim-strong-concavity} has strictly positive optimal value and so feature $a_1$ can be eliminated.
             The figure is drawn such that $\langle a_2, \nu^\ast\rangle = 0$, so $a_2$ cannot be eliminated.
             }
    \label{fig:feat-elim-strong-concavity}
\end{figure}

The optimal value is easily found in closed-form to be ${\langle a_i, \hat{\nu} \rangle - \sqrt{2L\epsilon}\|a_i\|}$.
As $\hat{x}$ and $\hat{\nu}$ become more accurate, $\hat{\nu}$ approaches $\nu^\ast$ and $\epsilon$ shrinks to zero; as this occurs the optimal value of the subproblem approaches $\langle a_i, \nu^\ast \rangle$, giving more precise lower bounds and thus increasing the strength of the subproblem to eliminate features.
It is therefore crucial to have accurate, feasible $\hat{x}$ and $\hat{\nu}$ in order to apply \eqref{eq:feat-elim-strong-concavity} effectively.

\section{Dual Line Search}
\label{sec:dual-line-search}

In order to use feature elimination subproblem \eqref{eq:feat-elim-strong-concavity}, we must have a primal feasible $\hat{x}$ and a dual feasible $\hat{\nu}$ that achieve a reasonably tight duality gap $\epsilon = f(\hat{x}) - g(\hat{\nu})$.
When using a first-order method on the primal we have access to an accurate primal feasible point $\hat{x}$ simply by taking one of the iterates.
But we typically do not have access to an accurate dual feasible point $\hat{\nu}$.
Hence we derive an inexpensive method to find an accurate dual feasible $\hat{\nu}$ from an accurate primal feasible $\hat{x}$.

\subsection{Finding an Accurate Dual Feasible $\hat{\nu}$ From an Accurate Primal Feasible $\hat{x}$}
\label{ssec:finding-nu-from-x}

To leverage the accuracy of $\hat{x}$, we form ${\nu' = \nabla f(A\hat{x})}$, since if $\hat{x}$ were optimal, then $\nabla f(A\hat{x})$ would be the dual optimal point (see Lemma \ref{lem:primal-optimal-to-nu}).
But note that $\nu'$ is not guaranteed to be dual feasible since $\hat{x}$ is not necessarily optimal (i.e., $A^T\nu' \not \ge 0$ is possible).
To fix this, perhaps the ``best'' approach is to solve the orthogonal projection problem
\begin{equation}
    \label{eq:orth-proj}
\begin{array}{ll}
    \text{min}_{\hat{\nu}} & \dfrac{1}{2}\|\hat{\nu} - \nu'\|^2\\
    \text{s.t.} & A^T\hat{\nu} \ge 0,
\end{array}
\end{equation}

\noindent which finds the closest dual feasible point to $\nu'$.
But this projection subproblem is expensive to solve, as it is closely related to the dual problem \eqref{eq:generic-dual}.
We do not need the ``best'' possible $\hat{\nu}$, however.
We only need $\hat{\nu}$ that does not spoil the accuracy provided by $\hat{x}$, thereby providing a small duality gap $\epsilon$.

To that end let us assume we have access to a strictly dual feasible point $\nustrict$.
We can use $\nustrict$ to construct $\hat{\nu}$ nearby $\nu'$ that is also dual feasible using a simple line search: we find the closest dual feasible point to $\nu'$ along the line segment between $\nu'$ and $\nustrict$ via
\begin{equation}
    \label{eq:dual-line-search}
\begin{array}{ll}
    \min & t\\
    \text{s.t.} & A^T((1-t)\nu' + t\nustrict) \ge 0\\
                & 0 \le t \le 1.
\end{array}
\end{equation}

\noindent Once we have solved the line search for $t^\ast$, we form $\hat{\nu} = (1-t^\ast)\nu' + t^\ast \nustrict$.
We can view this line search as a not-necessarily-orthogonal projection onto the dual feasible set.
We will give a few simple methods to find a strictly dual feasible $\nustrict$ in Subsection \ref{ssec:finding-strictly-dual-feasible-point}, and in Subsection \ref{ssec:dual-line-search-theory} we will show that $\nustrict$ being strictly dual feasible (instead of just dual feasible) is necessary for $\hat{\nu}$ from the line search to converge to $\nu^\ast$ as $\hat{x}$ converges to $x^\ast$.

See Figure \ref{fig:finding-nu0-nonneg} for a diagram of this line search in two dimensions.
The boundary of the dual feasible set is given by two hyperplanes ${\langle a_1, \nu \rangle = 0}$ and ${\langle a_2, \nu \rangle = 0}$.
We can see ${\langle a_2, \nu'\rangle < 0}$, so $\nu'$ is not dual feasible.

\begin{figure}[ht!]
    \centering
    \includegraphics[width=0.6\textwidth]{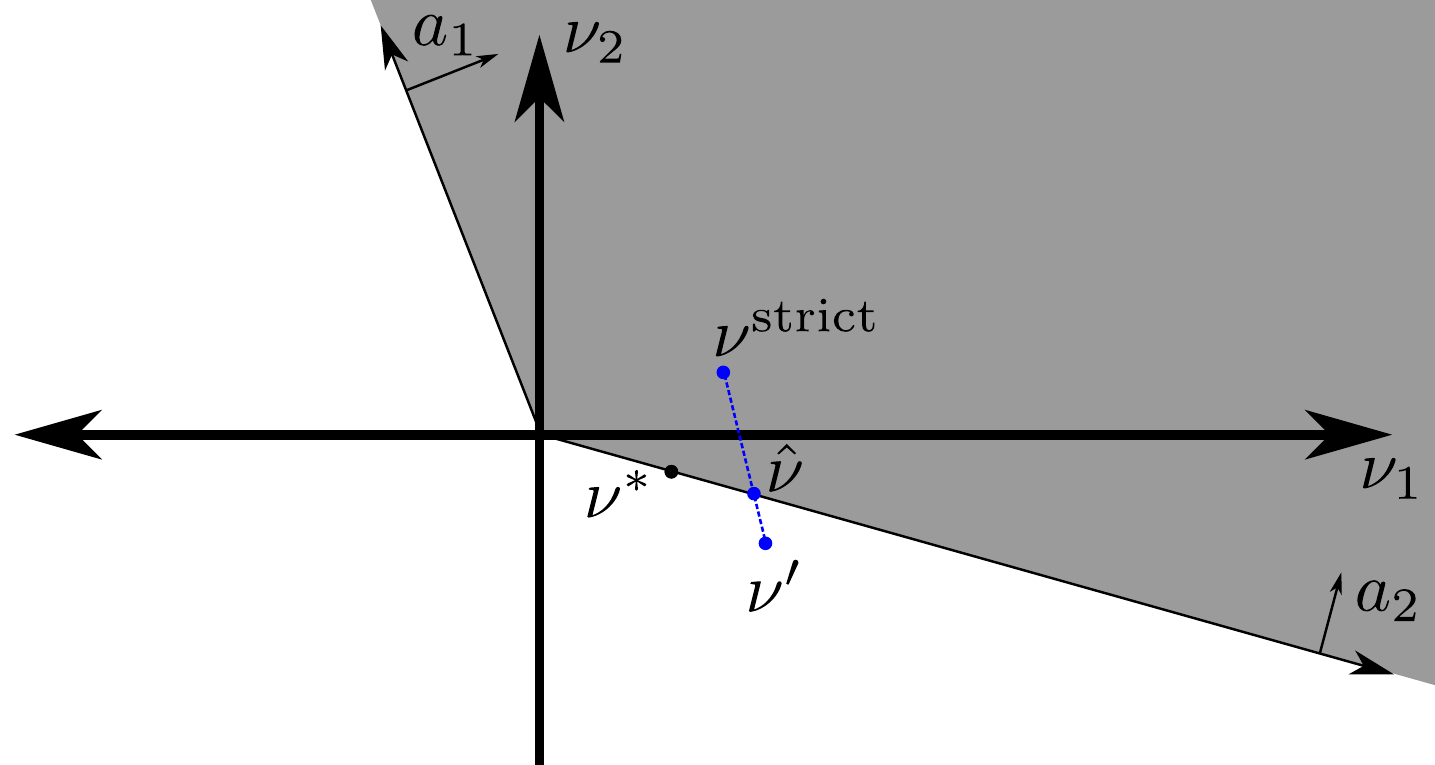}
    \caption{Finding $\hat{\nu}$ from $\nu'$ and $\nustrict$ via the dual line search \eqref{eq:dual-line-search}.}
    \label{fig:finding-nu0-nonneg}
\end{figure}

The constraint $t\ge 0$ is used in the line search only so that $\hat{\nu}=\nu'$ in the case when $\nu'$ is already dual feasible.
Additionally, the optimal value is never greater than $1$, since the point $\nustrict$ is assumed to be dual feasible.
By precomputing $A^T\nu'$ and $A^T\nustrict$, the optimal value of the line search and the resulting dual feasible point $\hat{\nu}$ can be found in closed-form, which is given in Subsection \ref{ssec:dual-line-search-theory}.

\subsubsection{Finding a Strictly Dual Feasible $\nustrict$}
\label{ssec:finding-strictly-dual-feasible-point}

For a given $A$, we can search for a strictly dual feasible point $\nustrict$ via the linear program (LP)
\begin{equation}
    \label{eq:strictly-dual-feasible-lp}
    \begin{array}{ll}
        \max_{\nu,t} & t\\
        \text{s.t.} & A^T \nu \ge t\\
                    & \mathbbm{1}^T A^T\nu = 1.
    \end{array}
\end{equation}

\noindent This problem maximizes a lower bound of $A^T\nu$ while the constraint ${\mathbbm{1}^TA^T\nu = 1}$ serves to keep $\nu$ bounded.
Note that this linear program may fail to find a strictly dual feasible $\nustrict$.
But it will do so only when $A$ does not admit any strictly feasible points (e.g., $A=0$ does not does not admit any strictly dual feasible points).
The cost of this LP is not much of a concern (unless $A$ is huge), as it only needs to be solved once to find $\nustrict$; once we have a strictly dual feasible $\nustrict$, we can use it for any primal problem of the form \eqref{eq:generic-primal} with the same $A$.

There are other methods to find a suitable $\nustrict$ without solving the LP \eqref{eq:strictly-dual-feasible-lp}.
If the sum of each row of $A$ is positive, the point ${\nustrict = \mathbbm{1}}$ is strictly dual feasible.
In particular, if $A$ is elementwise positive (as is the case in our microscopy example in Subsection \ref{ssec:microscopy-unique}), ${\nustrict = \mathbbm{1}}$ is strictly dual feasible.
We can also take $\nustrict = \max\{0,\nu'\}$, where $\nu' = \nabla f(A\hat{x})$; if $\nustrict$ has at least one positive entry, then it is strictly dual feasible.
In our microscopy example $A$ is elementwise positive and we find that using ${\nustrict = \max\{0,\nu'\}}$ reliably produces strictly dual feasible points (and avoids the need for solving the LP \eqref{eq:strictly-dual-feasible-lp}).

\subsubsection{The Dual Line Search is a Continuous Mapping}
\label{ssec:dual-line-search-theory}

In Subsection \ref{ssec:convergence-of-dual-sequence} we will show how $\hat{\nu}$ from the dual line search converges to the dual optimal point $\nu^\ast$ as $\hat{x}$ converges to a primal optimal point.
This will then be used to show that, under reasonable conditions, our dual line search and feature elimination strategy will eventually eliminate all zero features from the problem.
This means that if we perform sufficiently many iterations of a first-order method, we can eliminate all zero features from the problem.
To enable that analysis, we find a closed-form solution to the line search and prove a lemma on the continuity of the mapping from $\nu' = \nabla f(A\hat{x})$ to $\hat{\nu}$ found via the line search.

We find the closed-form solution to the line search by identifying two cases:
\begin{enumerate}
    \item If $\nu'$ is dual feasible, $t=0$ is the minimum feasible value, which leads to $\hat{\nu}=\nu'$.
    \item Otherwise, there is at least one index $i$ such that ${\langle a_i, \nu'\rangle = \{A^T\nu'\}_i < 0}$.
          In this case, we must increase $t$ until the all coordinates of $A^T((1-t)\nu' + t\nustrict)$ are non-negative.

\end{enumerate}

\noindent We define the scalar-valued function
$$
t(\lambda; \lambda_0) \defeq \left\{\begin{array}{ll} 0 & \lambda \ge 0\\
                                        \dfrac{\lambda}{\lambda - \lambda_0} & \lambda < 0,
        \end{array}\right.
$$

\noindent where $\lambda$ is the scalar independent variable and $\lambda_0$ is a fixed parameter.
We can write the dual feasible point returned from the line search as $\hat{\nu} = (1-t^\ast)\nu' + t^\ast\nustrict$ where
$$
t^\ast = \max_i~ t(a_i^T\nu'; a_i^T\nustrict).
$$

\begin{lemma}
    \label{lem:dual-line-search-continuous}
    If $\nustrict$ is strictly dual feasible (i.e., $A^T\nustrict > 0$), then the dual line search \eqref{eq:dual-line-search} mapping $\nu'$ to $\hat{\nu}$ is continuous in $\nu'$.
\end{lemma}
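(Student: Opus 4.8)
The plan is to take as given the closed-form description stated just above the lemma, namely $\hat{\nu} = (1-t^\ast)\nu' + t^\ast\nustrict$ with $t^\ast = \max_{1\le i\le n} t(a_i^T\nu';\,a_i^T\nustrict)$, and simply verify that each operation producing $\hat{\nu}$ from $\nu'$ is continuous. Since linear maps are continuous and finite maxima, sums, and products of continuous functions are continuous, the entire content of the lemma reduces to one scalar fact: the function $\lambda\mapsto t(\lambda;\lambda_0)$ is continuous on all of $\reals$ whenever $\lambda_0>0$.

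First I would record that strict dual feasibility of $\nustrict$ is exactly the statement $\lambda_0^{(i)}\defeq a_i^T\nustrict>0$ for every $i=1,\dots,n$; these scalars do not depend on $\nu'$. Next, fix any $\lambda_0>0$ and inspect $t(\cdot;\lambda_0)$ piecewise. On the open set $\{\lambda>0\}$ it is identically $0$, hence continuous. On $\{\lambda<0\}$ it equals $\lambda/(\lambda-\lambda_0)$, and there the denominator satisfies $\lambda-\lambda_0\le-\lambda_0<0$, so it is a ratio of continuous functions with nonvanishing denominator and is continuous. It remains to handle the junction $\lambda=0$: the value is $t(0;\lambda_0)=0$, the right-hand limit is $0$, and the left-hand limit is $\lim_{\lambda\uparrow 0}\lambda/(\lambda-\lambda_0)=0/(-\lambda_0)=0$; all three agree, so $t(\cdot;\lambda_0)$ is continuous at $0$ and therefore on $\reals$.

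Composing with the continuous (linear) maps $\nu'\mapsto a_i^T\nu'$, each $\nu'\mapsto t(a_i^T\nu';\,a_i^T\nustrict)$ is continuous; hence $t^\ast=\max_{1\le i\le n} t(a_i^T\nu';\,a_i^T\nustrict)$ is continuous as a finite maximum of continuous functions, and finally $\hat{\nu}=(1-t^\ast)\nu'+t^\ast\nustrict$ is continuous in $\nu'$.

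The only real obstacle is the behavior of $t(\cdot;\lambda_0)$ at the kink $\lambda=0$, and that is precisely where the hypothesis $\lambda_0>0$ is used: if some $a_i^T\nustrict$ were zero the left-hand limit at $\lambda=0$ would be $-\infty$ (a pole), and if it were negative $t(\cdot;\lambda_0)$ would have a pole at $\lambda=\lambda_0\in(-\infty,0)$. In either case the construction could fail to be continuous (or even well defined), which is why the lemma insists that $\nustrict$ be strictly dual feasible rather than merely dual feasible.
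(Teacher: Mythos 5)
Your proposal is correct and follows essentially the same route as the paper's proof: reduce to the continuity of $t^\ast$ as a finite maximum of the compositions $\nu'\mapsto t(a_i^T\nu';a_i^T\nustrict)$, which hinges on the scalar function $t(\cdot;\lambda_0)$ being continuous whenever $\lambda_0>0$. You simply spell out the piecewise check at the kink $\lambda=0$ in more detail than the paper does, which is a fine (and slightly more complete) elaboration of the same argument.
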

{\it Proof}
    The strict dual feasibility assumption states that $a_i^T\nustrict > 0$ for each $i$.
    The dual line search produces the point
    $$
    \hat{\nu} = (1-t^\ast)\nu' + t^\ast\nustrict.
    $$

    \noindent To show continuity of the mapping $\nu' \mapsto \hat{\nu}$, it is sufficient to show $t^\ast$ is continuous in $\nu'$.

    Observe that if $\lambda_0 > 0$ the function $t(\lambda;\lambda_0)$ is continuous for all $\lambda$.
    Since we take $\nustrict$ strictly dual feasible, $a_i^T\nustrict > 0$ for each $i$, meaning that $t(a_i^T\nu';a_i^T\nustrict)$ depends continuously on $\nu'$ for each $i$.
    Since $t^\ast$ is the pointwise maximum of continuous functions of $\nu'$, it is continuous in $\nu'$, completing the proof.\qed

The strict dual feasibility assumption in Lemma \ref{lem:dual-line-search-continuous} is necessary for the continuity of the mapping.
Let us look at the dual geometry when $\nustrict$ is not strictly dual feasible.
The non-strictly dual feasible point $\nustrict$ is on the boundary of the dual feasible set, since it satisfies $a_i^T \nustrict = 0$ for some $a_i$.
When $\nu'$ is not dual feasible, the only dual feasible point on the line segment between $\nu'$ and $\nustrict$ is $\nustrict$, so the dual line search returns $\hat{\nu} = \nustrict$.
Thus, when $\nustrict$ is not strictly dual feasible, the dual line search will return one of two values: when $\nu'$ is not feasible, the line search returns $\nustrict$; when $\nu'$ is feasible, the line search returns $\nu'$.

Figure \ref{fig:dual-line-search-pathological-case} illustrates what would happen if $\nu'$ converges to $\nu^\ast$ while remaining dual infeasible.
The line search is ``stuck'', returning $\hat{\nu} = \nustrict$ for all $\nu'$.
So as $\nu'\to\nu^\ast$, $\hat{\nu} = \nustrict$ is ``stuck'' and does not converge to $\nu^\ast$.
Picking $\nustrict$ to be strictly dual feasible ``unsticks'' the dual line search, allowing the returned value $\hat{\nu}$ to converge to $\nu^\ast$ as $\nu'\to \nu^\ast$.

\begin{figure}[h!]
    \centering
    \includegraphics[width=0.7\textwidth]{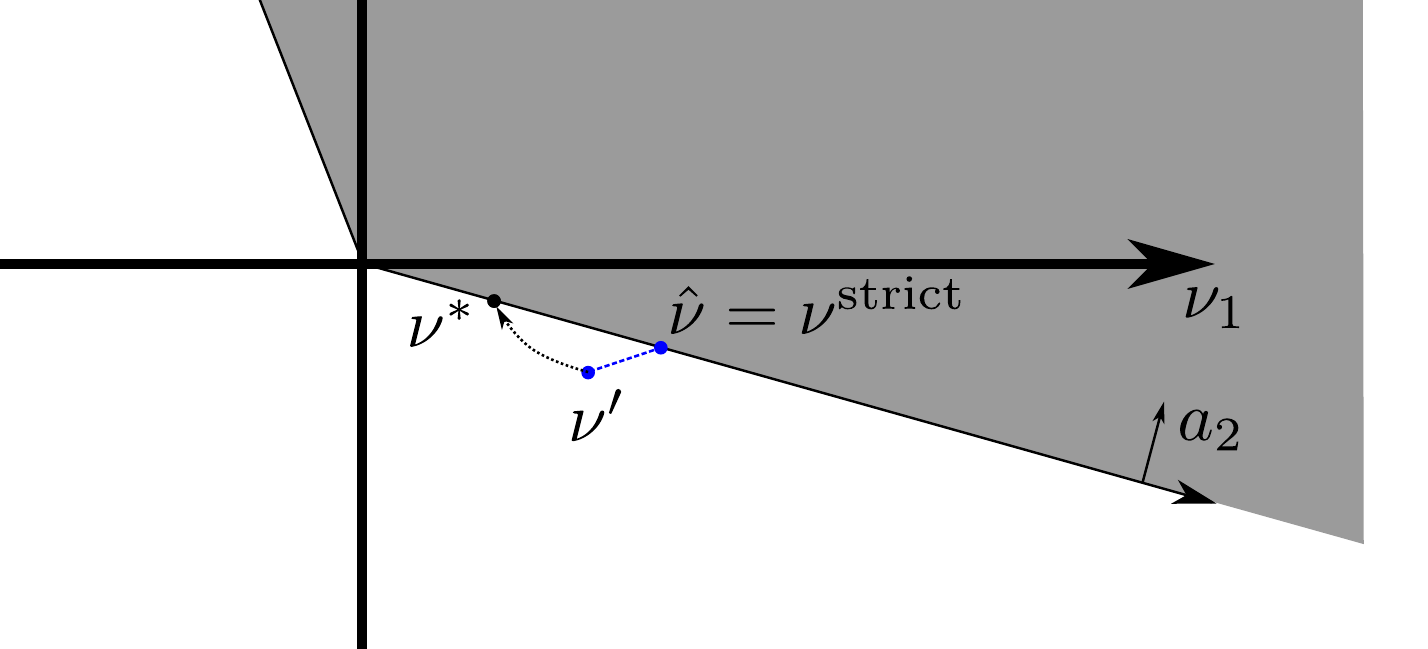}
    \caption[Pathological case for the dual line search]{Pathological case for the continuity of the dual line search when $\nustrict$ is on the boundary of the dual feasible set (i.e., $\nustrict$ is not strictly dual feasible).}
    \label{fig:dual-line-search-pathological-case}
\end{figure}

\subsection{Convergence of Dual Sequence Given Primal Sequence}
\label{ssec:convergence-of-dual-sequence}

Using a first-order method to solve \eqref{eq:generic-primal} typically provides a sequence $x^k$ of primal feasible points that converges to a primal optimal point $x^\ast$ (which may not be unique).
For example, the projected gradient method produces such a sequence (see Theorem 10.24 of \cite{beck2017first} for a proof of this).
In Subsection \ref{ssec:finding-nu-from-x}, we discussed a dual line search that allows us to produce a dual feasible point $\hat{\nu}^k$ from $\nu'^k \defeq \nabla f(Ax^k)$ and a separate dual feasible point $\nustrict$.

In Subsection \ref{sec:safe-basic} we saw a simple feature elimination subproblem \eqref{eq:feat-elim-strong-concavity} that utilized an accurate, but not necessarily optimal, primal-dual pair.
The strength of the subproblem depends on the size of the duality gap $\epsilon$.
In other words, as the duality gap shrinks, the lower bound produced by the subproblem increases, possibly eliminating the feature.
In order for the duality gap $\epsilon$ to shrink to zero as $x^k$ converges, we must also have that the dual line search produces $\hat{\nu}^k$ that converges to the dual optimal point $\nu^\ast$ (recall that strong duality holds for \eqref{eq:generic-primal} and \eqref{eq:generic-dual}).
If $\nustrict$ is strictly dual feasible, we will see that $\hat{\nu}^k \to \nu^\ast$ as $x^k\to x^\ast$, thus giving $\epsilon \to 0$ as desired.

First, we give a lemma that states ${\nu^\ast = \nabla f(Ax^\ast)}$ for any primal optimal $x^\ast$.
This comes somewhat directly from the KKT conditions for the dual problem.

\begin{lemma}
    \label{lem:primal-optimal-to-nu}
    Let $x^\ast$ be a primal optimal point of \eqref{eq:generic-primal}.
    Then ${\nu = \nabla f(Ax^\ast)}$ is the unique dual optimal point of \eqref{eq:generic-dual}.
\end{lemma}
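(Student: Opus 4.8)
The plan is to verify that $\nu = \nabla f(Ax^\ast)$ satisfies the KKT conditions \eqref{eq:kkt-first-order}--\eqref{eq:kkt-cs}, and then invoke the fact (already established in Section \ref{sec:prelim} via Slater's condition on the dual) that the KKT conditions are sufficient for dual optimality. Uniqueness is then immediate from the $1/L$-strong concavity of $g$ noted after \eqref{eq:generic-dual}. First I would observe that since $x^\ast$ is primal optimal and the KKT conditions are necessary, there exists \emph{some} dual vector $\bar\nu$ with $A^T\nabla f(Ax^\ast) = A^T\bar\nu$, $A^T\bar\nu \ge 0$, and $x_i^\ast\{A^T\bar\nu\}_i = 0$ for all $i$. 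The point of the lemma is that one can take $\bar\nu = \nabla f(Ax^\ast)$ itself.

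The key step is the substitution. Setting $\nu \defeq \nabla f(Ax^\ast)$, the stationarity condition \eqref{eq:kkt-first-order} reads $A^T\nabla f(Ax^\ast) - A^T\nu = A^T\nabla f(Ax^\ast) - A^T\nabla f(Ax^\ast) = 0$, which holds trivially. Primal feasibility \eqref{eq:kkt-primal-feas} holds because $x^\ast$ is primal feasible by assumption. For dual feasibility \eqref{eq:kkt-dual-feas} and complementary slackness \eqref{eq:kkt-cs}, I would use the existence of $\bar\nu$ from the previous paragraph: from \eqref{eq:kkt-first-order} applied to the pair $(x^\ast,\bar\nu)$ we get $A^T\nu = A^T\nabla f(Ax^\ast) = A^T\bar\nu$, so the vector $A^T\nu$ is \emph{equal} to $A^T\bar\nu$. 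Hence $A^T\nu = A^T\bar\nu \ge 0$ gives \eqref{eq:kkt-dual-feas}, and $x_i^\ast\{A^T\nu\}_i = x_i^\ast\{A^T\bar\nu\}_i = 0$ gives \eqref{eq:kkt-cs}. Thus $(x^\ast,\nu)$ satisfies all four KKT conditions, so by sufficiency of KKT (guaranteed by Slater's condition for the dual, as noted in Section \ref{sec:prelim}) $\nu$ is dual optimal.

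For uniqueness, I would simply cite the strong concavity: $g = -f^\ast$ is $1/L$-strongly concave, and a strongly concave function has at most one maximizer over the convex feasible set $\{\nu : A^T\nu \ge 0\}$; since we just exhibited one, it is the unique dual optimal point $\nu^\ast$. The main (minor) obstacle is the logical bookkeeping: one must not circularly assume $\nu = \nabla f(Ax^\ast)$ is the dual point in the KKT conditions from the outset, but rather use the generic $\bar\nu$ whose existence is guaranteed by necessity of KKT to transfer the feasibility and complementary-slackness facts over to $\nabla f(Ax^\ast)$. Alternatively, and perhaps more cleanly, one could bypass $\bar\nu$ entirely: appeal directly to the Fenchel--Rockafellar optimality conditions, which state that $(x^\ast,\nu^\ast)$ are primal-dual optimal iff $\nu^\ast \in \partial f(Ax^\ast)$ and $-A^T\nu^\ast \in \partial\iota_{\reals^n_+}(x^\ast)$; since $f$ is differentiable, $\partial f(Ax^\ast) = \{\nabla f(Ax^\ast)\}$ is a singleton, forcing $\nu^\ast = \nabla f(Ax^\ast)$ and simultaneously yielding both existence and uniqueness. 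I would likely present the KKT-substitution argument as the primary proof since the paper has already set up the KKT conditions explicitly, and it keeps the exposition self-contained.
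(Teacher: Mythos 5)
Your proposal is correct, but it is genuinely more detailed than what the paper does: the paper's entire proof is a citation (``this is a consequence of Theorem 19.1 of \cite{bauschke2017convex}''), i.e., precisely the Fenchel--Rockafellar route you describe as your \emph{alternative} at the end. Your primary argument --- verify the KKT system \eqref{eq:kkt-first-order}--\eqref{eq:kkt-cs} for the candidate $\nu=\nabla f(Ax^\ast)$ by transferring feasibility and complementary slackness from a generic multiplier $\bar\nu$ via $A^T\nu=A^T\bar\nu$, then invoke sufficiency and conclude uniqueness from the $1/L$-strong concavity of $g$ --- is a self-contained, elementary substitute for that citation, and the bookkeeping with $\bar\nu$ correctly avoids the circularity you identify. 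What the citation buys the paper is brevity and immunity to the following subtlety; what your argument buys is that a reader never has to open Bauschke--Combettes.

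The one point to be careful about: the stationarity condition \eqref{eq:kkt-first-order} is written as $A^T\nabla f(Ax)-A^T\nu=0$, which when $m>n$ (so $A^T$ has a nontrivial kernel) is strictly weaker than $\nu=\nabla f(Ax)$. In that case the four conditions as literally written are \emph{not} sufficient for $\nu$ to be optimal for the dual \eqref{eq:generic-dual}: e.g., with $A=\begin{bmatrix}1&0\end{bmatrix}^T$, $f(z)=\tfrac12\|z-b\|^2$, $b=(0,1)$, the point $\nu'=(0,5)$ satisfies all four conditions with $x^\ast=0$ yet is not the dual maximizer. So the step ``KKT holds for $(x^\ast,\nu)$, hence $\nu$ is dual optimal'' leans on a sufficiency claim that, read literally, fails in the overdetermined case. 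The fix is exactly your alternative: the genuine Fenchel--Rockafellar stationarity is $\nu\in\partial f(Ax^\ast)=\{\nabla f(Ax^\ast)\}$, which your specific candidate satisfies by construction (and which any dual optimal point must satisfy, giving existence and uniqueness in one stroke). I would present the subdifferential version as the main argument, or at least state \eqref{eq:kkt-first-order} in the strengthened form $\nu=\nabla f(Ax)$ before invoking sufficiency; with that adjustment your proof is complete.
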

\begin{proof}
    This is a known result coming from optimality conditions for the dual problem.
    For example, this is a consequence of Theorem 19.1 of \cite{bauschke2017convex}.
\end{proof}

Now we show that the dual line search produces a sequence of points that converges to the dual optimal point.

\begin{theorem}
    \label{thm:dual-line-search-converges}
    Let $x^k$ be a sequence of primal feasible points that converge to a primal optimal point $x^\ast$, and let $\nustrict$ be a strictly dual feasible point (i.e., $A^T\nustrict > 0$).
    For each $k$, define the dual feasible point $\hat{\nu}^k$ by performing the dual line search \eqref{eq:dual-line-search} using ${\nu'^k = \nabla f(Ax^k)}$ and $\nustrict$.
    Then the sequence $\hat{\nu}^k$ of dual feasible points converges to the unique dual optimal point $\nu^\ast$.
\end{theorem}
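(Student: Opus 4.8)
The strategy is to combine three facts: (i) continuity of $f$, $\nabla f$, and the linear map $A$, so $\nu'^k = \nabla f(Ax^k) \to \nabla f(Ax^\ast)$; (ii) Lemma~\ref{lem:primal-optimal-to-nu}, which identifies $\nabla f(Ax^\ast)$ as the unique dual optimal point $\nu^\ast$; and (iii) Lemma~\ref{lem:dual-line-search-continuous}, which says the dual line search map $\nu' \mapsto \hat{\nu}$ is continuous in $\nu'$ whenever $\nustrict$ is strictly dual feasible. Chaining these gives $\hat{\nu}^k \to \nu^\ast$.

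\textbf{Key steps, in order.} First I would observe that $x \mapsto \nabla f(Ax)$ is continuous: $A$ is linear (hence continuous), and $\nabla f$ is $L$-Lipschitz by hypothesis, so the composition is continuous (indeed Lipschitz in $x$ with constant $L\|A\|$). Therefore $x^k \to x^\ast$ implies $\nu'^k = \nabla f(Ax^k) \to \nabla f(Ax^\ast) =: \nu'^\ast$. Second, apply Lemma~\ref{lem:primal-optimal-to-nu} to conclude $\nu'^\ast = \nu^\ast$, the unique dual optimal point. Third, let $\Phi$ denote the dual line search map sending $\nu'$ to $\hat{\nu} = (1-t^\ast)\nu' + t^\ast \nustrict$; since $\nustrict$ is strictly dual feasible, Lemma~\ref{lem:dual-line-search-continuous} gives that $\Phi$ is continuous. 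Fourth, by sequential continuity of $\Phi$ at $\nu'^\ast$, we have $\hat{\nu}^k = \Phi(\nu'^k) \to \Phi(\nu'^\ast)$. Finally I would identify $\Phi(\nu'^\ast)$: since $\nu'^\ast = \nu^\ast$ is dual feasible (it satisfies $A^T\nu^\ast \ge 0$ by \eqref{eq:kkt-dual-feas}), we are in case~1 of the closed-form solution, i.e. $t^\ast = \max_i t(a_i^T\nu^\ast; a_i^T\nustrict) = 0$ because each $a_i^T\nu^\ast \ge 0$ forces every $t(a_i^T\nu^\ast; \cdot) = 0$. Hence $\Phi(\nu^\ast) = \nu^\ast$, and therefore $\hat{\nu}^k \to \nu^\ast$, which is what we wanted.

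\textbf{Main obstacle.} There is no deep obstacle here — the theorem is essentially a bookkeeping assembly of the two preceding lemmas plus elementary continuity of $\nabla f \circ A$. The one point requiring a little care is the last step: verifying that the line search map fixes $\nu^\ast$, i.e. that $\Phi(\nu^\ast) = \nu^\ast$ rather than merely that $\Phi(\nu^\ast)$ lies somewhere in the dual feasible set. This follows because $\nu^\ast$ is itself dual feasible, so the constraint $t \ge 0$ (discussed after \eqref{eq:dual-line-search}) makes the line search return $t^\ast = 0$ and hence $\hat{\nu} = \nu^\ast$; one should state this explicitly so the limit is correctly identified. A secondary subtlety worth a sentence: continuity of $\Phi$ is only guaranteed under strict dual feasibility of $\nustrict$ (cf. the pathological case in Figure~\ref{fig:dual-line-search-pathological-case}), which is exactly why that hypothesis appears in the theorem statement.
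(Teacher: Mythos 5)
Your proof is correct, and it is organized somewhat differently from the paper's. You treat the dual line search as a map $\Phi:\nu'\mapsto\hat{\nu}$ and argue by composition: $\nu'^k\to\nu^\ast$ by continuity of $\nabla f\circ A$ together with Lemma \ref{lem:primal-optimal-to-nu}, then $\hat{\nu}^k=\Phi(\nu'^k)\to\Phi(\nu^\ast)$ by the continuity guaranteed in Lemma \ref{lem:dual-line-search-continuous}, and finally $\Phi(\nu^\ast)=\nu^\ast$ because $\nu^\ast$ is dual feasible, so every $t(a_i^T\nu^\ast;a_i^T\nustrict)=0$ and the line search returns $t^\ast=0$. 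The paper instead estimates $\|\hat{\nu}^k-\nu^\ast\|\le\|\hat{\nu}^k-\nu'^k\|+\|\nu'^k-\nu^\ast\|$, writes $\|\hat{\nu}^k-\nu'^k\|=t^k\|\nu'^k-\nustrict\|$, notes the second factor is eventually bounded, and shows $t^k\to 0$ directly by recalling (from the proof of Lemma \ref{lem:dual-line-search-continuous}, rather than its statement) that each $t(a_i^T\nu'^k;a_i^T\nustrict)$ is continuous in $\nu'$ and tends to $0$ since $a_i^T\nu'^k$ tends to a nonnegative limit. The two arguments rest on the same underlying fact, namely continuity of $t(\cdot;\lambda_0)$ when $\lambda_0>0$, but your version invokes Lemma \ref{lem:dual-line-search-continuous} as a black box, which is arguably the cleaner use of the preceding machinery; the one extra obligation this incurs is the fixed-point identification $\Phi(\nu^\ast)=\nu^\ast$, which you correctly flag as the only delicate point and discharge properly. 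Both arguments are complete.
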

{\it Proof}
    Note that the map ${x^k \mapsto \nu'^k = \nabla f(Ax^k)}$ is continuous by assumption.
    By continuity and Lemma \ref{lem:primal-optimal-to-nu}, $\nu'^k \to \nu^\ast$ as $x^k \to x^\ast$.
    But $\nu'^k$ is not guaranteed to be dual feasible, hence our use of the dual line search.
    We seek to show that the dual feasible sequence $\hat{\nu}^k \to \nu^\ast$ as $x^k\to x^\ast$.
    By the triangle inequality,
    $$
    \|\hat{\nu}^k - \nu^\ast\| \le \|\hat{\nu}^k - \nu'^k\| + \| \nu'^k - \nu^\ast\|.
    $$

    \noindent We already have that $\|\nu'^k - \nu^\ast\|\to 0$, so to complete the proof it remains to show $\|\hat{\nu}^k - \nu'^k\|\to 0$.

    Since $\hat{\nu}^k$ is computed using the dual line search using $\nu'^k$ and $\nustrict$, we have that
    $$
    \|\hat{\nu}^k - \nu'^k\| = \|(1-t^k)\nu'^k + t^k\nustrict - \nu'^k\| = t^k \|\nu'^k - \nustrict\|,
    $$

    \noindent where $t^k$ is the optimal value of $t$ from the dual line search for that particular $\nu'^k$.
    Since $\nu'^k\to\nu^\ast$ and $\nu^\ast$ is unique, we know that $\|\nu'^k-\nustrict\|$ is eventually bounded above by a constant.
    Therefore, we just need to show $t^k\to 0$ to imply that $\|\hat{\nu}^k - \nu'^k\|\to 0$.

    Recalling the proof of Lemma \ref{lem:dual-line-search-continuous}, the function
    $$
    \nu'\mapsto t(a_i^T\nu'; a_i^T\nustrict) = \left\{\begin{array}{ll} 0 & a_i^T\nu' \ge 0\\
                                        \dfrac{a_i^T\nu'}{a_i^T\nu' - a_i^T\nustrict} & a_i^T\nu' < 0,
        \end{array}\right.
    $$
    \noindent is continuous precisely when $a_i^T\nustrict > 0$ (i.e., when $\nustrict$ is strictly dual feasible).
    Since $\nu'^k$ converges to a dual feasible point, $a_i^T\nu'^k$ converges to a nonnegative value for each $a_i$.
    Therefore the limiting value of $t(a_i^T\nu'^k; a_i^T\nustrict)$ is $0$ for each $a_i$, and thus $t^k = \max_i t(a_i^T\nu'^k; a_i^T\nustrict) \to 0$.
    This then shows that $\|\hat{\nu}^k - \nu'^k\| \to 0$, which, using the triangle inequality above, implies that $\hat{\nu}^k\to\nu^\ast$ as desired.
    \qed

\section{When Will the Screening Rule Eliminate all Zero Features?}
\label{sec:feature-elim-all-zero-features}

Here we show that the feature elimination subproblem \eqref{eq:feat-elim-strong-concavity} will eliminate all zero features from the solution, under reasonable assumptions and with sufficiently small duality gap $\epsilon$.
Coupled with the use of a first-order method and the dual line search of Subsection \ref{ssec:finding-nu-from-x}, this means that our feature elimination strategy will eventually eliminate all features that can be eliminated.

\begin{theorem}
    \label{thm:feat-elim-works-another-bound}
    Assume that strict complementary slackness holds (i.e., $x_i^\ast = 0$ iff $\langle a_i, \nu^\ast\rangle > 0$).
    Define $\mathcal{I}\defeq {\{i \,:\, \langle a_i, \nu^\ast \rangle > 0\}}$ to be the index set of zero features.
    Let the pair $\hat{x},\hat{\nu}$ produce a duality gap estimate $\epsilon$ such that
    $$
    \sqrt{\epsilon} < \dfrac{1}{\sqrt{2L}} \min_{i\in\mathcal{I}} \dfrac{\langle a_i, \hat{\nu}\rangle}{\|a_i\|}.
    $$

    \noindent Then the feature elimination problem \eqref{eq:feat-elim-strong-concavity} will eliminate all zero features.
\end{theorem}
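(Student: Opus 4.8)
The plan is to reduce the whole statement to the closed-form optimal value of the feature elimination subproblem \eqref{eq:feat-elim-strong-concavity} already recorded above, namely $\langle a_i, \hat{\nu}\rangle - \sqrt{2L\epsilon}\,\|a_i\|$. A feature $a_i$ is safely eliminated precisely when this value is strictly positive, so the theorem amounts to verifying one scalar inequality for every index in $\mathcal{I}$. Before doing so I would fix the meaning of ``all zero features'': by the assumed strict complementary slackness, for any primal optimal $x^\ast$ we have $x_i^\ast = 0 \iff \langle a_i,\nu^\ast\rangle > 0 \iff i\in\mathcal{I}$, so every optimal point has support equal to the complement of $\mathcal{I}$ and the phrase is unambiguous. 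I would also record at the outset that $A$ has full rank, hence $a_i\neq 0$ and $\|a_i\|>0$ for every $i$, so the quotients in the hypothesis and in the argument below are well defined. Safety (no nonzero feature is ever removed) needs nothing further, since it was already established that the search set $N$ contains $\nu^\ast$.

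Next I would fix an arbitrary $i\in\mathcal{I}$. Because the minimum in the hypothesis is taken over $\mathcal{I}$ and $i\in\mathcal{I}$, the hypothesis gives $\sqrt{\epsilon} < \frac{1}{\sqrt{2L}}\,\langle a_i,\hat{\nu}\rangle/\|a_i\|$. Multiplying through by the positive quantity $\sqrt{2L}\,\|a_i\|$ yields $\sqrt{2L\epsilon}\,\|a_i\| < \langle a_i,\hat{\nu}\rangle$, that is, $\langle a_i,\hat{\nu}\rangle - \sqrt{2L\epsilon}\,\|a_i\| > 0$. By the closed form for the optimal value of \eqref{eq:feat-elim-strong-concavity}, the subproblem for column $i$ has strictly positive optimal value, so feature $a_i$ is eliminated. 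Since $i\in\mathcal{I}$ was arbitrary, every zero feature is eliminated, which is the claim.

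The computation is short; the only point requiring a word of care is the implicit content of the hypothesis. Since $\hat{\nu}$ is dual feasible, $\langle a_i,\hat{\nu}\rangle\geq 0$ always; if $\langle a_i,\hat{\nu}\rangle = 0$ for some $i\in\mathcal{I}$, then the right-hand side of the hypothesis is $0$ and the hypothesis cannot hold for any $\epsilon\geq 0$, so the statement is vacuous in that case and the argument above never needs to invoke $\langle a_i,\hat{\nu}\rangle>0$ separately. I expect this bookkeeping, rather than any genuine estimate, to be the main (and minor) obstacle. I would finish by tying the bound to the rest of the paper: by Theorem \ref{thm:dual-line-search-converges}, running a first-order method together with the dual line search yields $\hat{\nu}^k\to\nu^\ast$ with duality gaps $\epsilon_k\to 0$, while the right-hand side of the hypothesis converges to $\frac{1}{\sqrt{2L}}\min_{i\in\mathcal{I}}\langle a_i,\nu^\ast\rangle/\|a_i\| > 0$; hence the hypothesis holds for all sufficiently large $k$, so the screening rule eventually eliminates all zero features.
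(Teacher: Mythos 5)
Your proposal is correct and follows essentially the same route as the paper's proof: both reduce the claim to the closed-form optimal value $\langle a_i,\hat{\nu}\rangle - \sqrt{2L\epsilon}\,\|a_i\|$ of \eqref{eq:feat-elim-strong-concavity} and observe that the hypothesis on $\epsilon$ is precisely the condition that this value be strictly positive for every $i\in\mathcal{I}$. (One tiny correction: full rank of an underdetermined $A$ does not by itself rule out a zero column, but for $i\in\mathcal{I}$ the condition $\langle a_i,\nu^\ast\rangle>0$ already forces $a_i\neq 0$, so your quotients are well defined regardless.)
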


\begin{proof}  
    Since we have assumed strict complementary slackness, a zero feature $x_i^\ast = 0$ always corresponds to ${\langle a_i, \nu^\ast \rangle > 0}$.
    Since $\nu^\ast$ is unique, the set $\mathcal{I}$ uniquely determines the indexes of zero features.
    If we did not assume strict complementary slackness, then a zero feature $x_i^\ast = 0$ may be associated with ${\langle a_i, \nu^\ast \rangle = 0}$, which cannot be eliminated by the subproblem \eqref{eq:feat-elim-strong-concavity}.
    So we see that strict complementary slackness implies that each zero feature corresponds to a strictly satisfied dual inequality.
    We must show that the subproblem \eqref{eq:feat-elim-strong-concavity} produces a strictly positive lower bound for $\langle a_i, \nu^\ast \rangle$ for every $i\in\mathcal{I}$.

    For the subproblem to produce a strictly positive lower bound for the $i$th feature, the search set $N$ must be contained strictly in the interior of the halfspace $\langle a_i, \nu\rangle \ge 0$.
    The search set $N$ is a (closed) ball of radius $\sqrt{2L\epsilon}$ centered at $\hat{\nu}$.
    Recall from Section \ref{sec:safe-basic} that the optimal value of \eqref{eq:feat-elim-strong-concavity} is $\langle a_i, \hat{\nu}\rangle  - \sqrt{2L\epsilon}\|a_i\|$ for the $i$th feature.
    Up to a scaling factor, this is the minimum distance between the search set $N$ and the hyperplane $\langle a_i, \nu\rangle = 0$.
    Therefore the search set $N$ is strictly separated from the hyperplane precisely when the optimal value is strictly positive:
    $$
    \langle a_i,\hat{\nu}\rangle - \sqrt{2L\epsilon}\|a_i\| > 0 \Longleftrightarrow \sqrt{\epsilon} < \dfrac{1}{\sqrt{2L}} \dfrac{\langle a_i, \hat{\nu}\rangle}{\|a_i\|}.
    $$

    \noindent By assumption on the size of $\epsilon$, this condition is satisfied for each $i\in\mathcal{I}$, so feature elimination will eliminate all zero features.
\end{proof}

Theorem \ref{thm:feat-elim-works-another-bound} shows that under strict complementarity and if the duality gap $\epsilon$ is sufficiently small, then feature elimination will eliminate all zero features from the problem.
We used knowledge of the the exact dual optimal point only to assist in quantifying how small $\epsilon$ must be in order to imply that feature elimination will work.
But even without knowledge of the dual optimal point, we still know that if $\epsilon$ is sufficiently small, then feature elimination will have worked.
Indeed, by combining Theorems \ref{thm:dual-line-search-converges} and \ref{thm:feat-elim-works-another-bound}, we have the following:

\begin{corollary}
    \label{cor:feat-elim-works-eventually}
    Assume that strict complementary slackness holds (i.e., $x_i^\ast = 0$ iff $\langle a_i, \nu^\ast\rangle > 0$).
    Let $x^k$ be a sequence of primal feasible points that converges to $x^\ast$ (e.g., from a first-order method) and let $\hat{\nu}^k$ be the sequence of dual feasible points produced as in Theorem \ref{thm:dual-line-search-converges}.
    Then the duality gap ${\epsilon = f(x^k) - g(\hat{\nu}^k)} \to 0$ as $k\to\infty$ and Theorem \ref{thm:feat-elim-works-another-bound} will eventually apply.
    This means that the feature elimination subproblem \eqref{eq:feat-elim-strong-concavity} will eventually eliminate all zero features.
\end{corollary}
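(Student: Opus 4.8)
The plan is to combine Theorem~\ref{thm:dual-line-search-converges} and Theorem~\ref{thm:feat-elim-works-another-bound}: those two results do essentially all of the work, and what remains is to check that the smallness hypothesis on $\epsilon$ in Theorem~\ref{thm:feat-elim-works-another-bound} holds for all large enough $k$. First I would invoke Theorem~\ref{thm:dual-line-search-converges}: since $x^k\to x^\ast$ and $\nustrict$ is strictly dual feasible by hypothesis, the dual feasible sequence $\hat{\nu}^k$ produced by the line search satisfies $\hat{\nu}^k\to\nu^\ast$, the unique dual optimal point.

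Next I would show that the duality gap $\epsilon_k \defeq f(Ax^k)-g(\hat{\nu}^k)\to 0$. Using Lemma~\ref{lem:primal-optimal-to-nu} and strong duality, $p^\ast=d^\ast=f(Ax^\ast)=g(\nu^\ast)$, so one may write $\epsilon_k=\big(f(Ax^k)-f(Ax^\ast)\big)+\big(g(\nu^\ast)-g(\hat{\nu}^k)\big)$, in which both terms are nonnegative (by weak duality and by optimality of $\nu^\ast$, respectively). Since $f$ has a globally Lipschitz gradient it is finite and hence continuous on $\reals^m$, so $f(Ax^k)\to f(Ax^\ast)$ and the first term vanishes. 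For the second term one needs $g(\hat{\nu}^k)\to g(\nu^\ast)$: by the Fenchel equality for gradients, $g(\nu'^k)=f(Ax^k)-\langle A^T\nabla f(Ax^k),x^k\rangle$, which by continuity of $\nabla f$ and the complementary slackness relation $\langle A^T\nu^\ast,x^\ast\rangle=0$ tends to $f(Ax^\ast)=g(\nu^\ast)$; then, since $\hat{\nu}^k=(1-t^k)\nu'^k+t^k\nustrict$ with $t^k\to 0$ (established in the proof of Theorem~\ref{thm:dual-line-search-converges}) and $\nustrict\in\dom g$ (automatic in the NNLS application, where $\dom g=\reals^m$), concavity gives $g(\hat{\nu}^k)\ge(1-t^k)g(\nu'^k)+t^kg(\nustrict)\to g(\nu^\ast)$, while $g(\hat{\nu}^k)\le g(\nu^\ast)$ by optimality. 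Hence $\epsilon_k\to 0$.

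Finally I would verify the hypothesis $\sqrt{\epsilon_k}<\frac{1}{\sqrt{2L}}\min_{i\in\mathcal{I}}\frac{\langle a_i,\hat{\nu}^k\rangle}{\|a_i\|}$ of Theorem~\ref{thm:feat-elim-works-another-bound} for all large $k$. If $\mathcal{I}=\emptyset$ there are no zero features and nothing to prove, so assume $\mathcal{I}\ne\emptyset$. Strict complementary slackness gives $\langle a_i,\nu^\ast\rangle>0$ for every $i\in\mathcal{I}$, and since $\mathcal{I}\subseteq\{1,\dots,n\}$ is finite and $\hat{\nu}^k\to\nu^\ast$, the right-hand side converges to the strictly positive constant $c\defeq\frac{1}{\sqrt{2L}}\min_{i\in\mathcal{I}}\frac{\langle a_i,\nu^\ast\rangle}{\|a_i\|}$. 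Since $\sqrt{\epsilon_k}\to 0$, there is $K$ such that for all $k\ge K$ the right-hand side exceeds $c/2$ while $\sqrt{\epsilon_k}<c/2$, so the hypothesis holds and Theorem~\ref{thm:feat-elim-works-another-bound} applies; thus for every $k\ge K$ the subproblem \eqref{eq:feat-elim-strong-concavity} eliminates all zero features. The only step that takes any real thought is the convergence $\epsilon_k\to 0$ — specifically, justifying $g(\hat{\nu}^k)\to g(\nu^\ast)$ without assuming more about $\dom g$ than we are given; everything else is an immediate consequence of stitching the two theorems together, and one could alternatively simply cite the continuity remarks already made in Sections~\ref{sec:safe-basic} and~\ref{sec:dual-line-search} for the claim $\epsilon_k\to 0$.
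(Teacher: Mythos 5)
Your proposal is correct and takes the same route the paper intends: the paper gives no separate proof of Corollary~\ref{cor:feat-elim-works-eventually}, stating only that it follows by combining Theorems~\ref{thm:dual-line-search-converges} and~\ref{thm:feat-elim-works-another-bound}, which is exactly your skeleton (invoke $\hat{\nu}^k\to\nu^\ast$, show $\epsilon_k\to 0$, then check the $\epsilon$-smallness hypothesis for large $k$ using $\mathcal{I}$ finite and $\langle a_i,\nu^\ast\rangle>0$). Your careful justification that $g(\hat{\nu}^k)\to g(\nu^\ast)$ via the Fenchel identity and concavity along the segment to $\nustrict$ is a welcome addition, since the paper only asserts $\epsilon\to 0$ in Subsection~\ref{ssec:convergence-of-dual-sequence} (implicitly assuming continuity of $g$ at $\nu^\ast$), and you correctly flag the only extra hypothesis it needs, namely $g(\nustrict)>-\infty$.
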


This tells us that if we do enough iterations of a first-order method, perform the dual line search, and then do feature elimination, we will eliminate all possible features.
But we don't know how many iterations are sufficient (without knowledge of the dual optimal point, that is).
Furthermore, if strict complementarity does not hold, we can only eliminate zero features that correspond to $\langle a_i, \nu^\ast \rangle > 0$; a zero feature that corresponds to $\langle a_i, \nu^\ast\rangle = 0$ cannot be eliminated.
These issues notwithstanding, we can still use feature elimination very effectively in practice, including certifying that underdetermined NNLS problems have unique solutions.

\section{Certifying NNLS Solution Uniqueness}
\label{sec:nnls-uniqueness-theory}

Here we consider applying safe feature elimination to the problem of certifying the uniqueness of the primal solution.
We consider the case of NNLS, where $f(z) = \frac{1}{2}\|z - b\|^2$, which reduces the primal problem \eqref{eq:generic-primal} to
\begin{equation}
    \label{eq:nnls-primal}
    \begin{array}{ll}
        \min_x & \frac{1}{2}\|Ax-b\|^2\\
        \st & x \ge 0.
    \end{array}
\end{equation}

\noindent We have assumed throughout that the ${m\times n}$ matrix $A$ is full-rank.
But we have not yet assumed anything about the shape of $A$, which may be ``overdetermined'' ($m \ge n$) or ``underdetermined'' ($m < n$).
In the overdetermined case, the primal objective is $\sigma_\text{min}(A^TA)$-strongly convex, where $\sigma_\text{min}(A^TA)$ is the minimum singular value of $A^TA$.
Since $A$ is overdetermined, the Hessian $A^TA$ of the NNLS problem is non-singular, so $\sigma_\text{min}(A^TA) > 0$.
The NNLS primal problem \eqref{eq:nnls-primal} therefore has a unique optimal point \cite{beck2017first}.
In the underdetermined case the Hessian $A^TA$ is singular and the primal objective is neither strongly nor strictly convex, so there is no such uniqueness guarantee.

To attempt to certify the uniqueness of solutions to underdetermined problems we use our feature elimination strategy to reduce the problem to an overdetermined, full-rank NNLS problem.
Suppose we eliminate $r$ features/columns of $A$.
This allows us to form the reduced matrix $A_\text{red}$ with those $r$ columns removed and with the guarantee that the removed columns are not used by a solution of the original problem.
If $r \ge n-m$, so that the reduced matrix $A_\text{red}$ is overdetermined, and if $A_\text{red}$ is full-rank, then the reduced NNLS problem has a strongly convex objective and has a unique solution.
Since our feature elimination strategy is safe, the solution to the original NNLS problem is guaranteed to be the same as the solution to the reduced problem (with appropriate zero padding), meaning that the original NNLS problem has a unique solution.
Thus we have a procedure to robustly certify the uniqueness of NNLS problems via our safe feature elimination strategy, which requires an accurate, but not optimal, primal-dual pair.

Note that to certify uniqueness we need not eliminate all zero features, as was the goal of Theorem \ref{thm:feat-elim-works-another-bound}.
In a sense, certifying uniqueness is an easier problem than eliminating all zero features; indeed, to certify uniqueness we need only eliminate sufficiently many ($r\ge n-m$) features.
We therefore do not require (full) strict complementary slackness, as was assumed in Theorem \ref{thm:feat-elim-works-another-bound}.
There may be some zero features $x_i^\ast=0$ paired with $\langle a_i , \nu^\ast \rangle = 0$ but that does not concern us as long as there are sufficiently many $x_i^\ast=0$ such that $\langle a_i, \nu^\ast \rangle > 0$, enabling us to certify the uniqueness of $x^\ast$.

Observe that we can generalize this uniqueness certification procedure to a general objective $f(Ax)$ where $f$ is strictly convex.
When $A$ is underdetermined the objective $f(Ax)$ is no longer strictly convex.
But if we use SAFE to eliminate sufficiently many features such that the reduced matrix is overdetermined and full-rank, the reduced objective is strictly convex and therefore a minimizer is unique.
One can also modify this uniqueness certification technique to work with $\ell_1$-regularized problems (like lasso), for instance using the GAP SAFE rules of \cite{fercoq2015mind}, which are similar to \eqref{eq:feat-elim-strong-concavity}.

\subsection{A Small NNLS Example}
\label{ssec:nnls-small-example}

Let us illustrate our procedure with a small example.
For NNLS, the dual problem \eqref{eq:generic-dual} reduces to
$$
\begin{array}{ll}
    \max_\nu & g(\nu) = -\frac{1}{2}\|\nu + b\|^2 + \frac{1}{2}\|b\|^2\\
    \st & A^T\nu \ge 0.
\end{array}
$$

\noindent Suppose we have a primal feasible point $\hat{x}$ and dual feasible point $\hat{\nu}$.
This allows us to compute the duality gap $\epsilon = f(A\hat{x}) - g(\hat{\nu})$.
The basic feature elimination subproblem \eqref{eq:feat-elim-strong-concavity} reduces to
\begin{equation}
    \label{eq:nnls-basic-feat-elim}
    \begin{array}{ll}
        \min_\nu & \langle a_i, \nu \rangle\\
        \st & \|\nu - \hat{\nu}\|^2 \le 2\epsilon,
    \end{array}
\end{equation}

\noindent where we have used that the dual objective $g$ is 1-strongly convex (since $f$ has $1$-Lipschitz continuous gradient).

Consider the following matrix with randomly chosen entries
$$
A = \begin{bmatrix}
    \phantom{-}1 & \phantom{-}6 & -1           & \phantom{-}8 & \phantom{-}0\\
    -2           & \phantom{-}7 & \phantom{-}1 & \phantom{-}8 & \phantom{-}2\\
    \phantom{-}3 & \phantom{-}1 & \phantom{-}4 & \phantom{-}1 & -5\\
\end{bmatrix}
,
$$

\noindent and right-hand side (RHS) $b=\begin{bmatrix} -1&2&1\end{bmatrix}^T$.
We note that there is nothing special about these entries; the entries are the first few digits of the golden ratio, the base of the natural logarithm, and pi, with some negative signs added.
Projected gradient descent (PGD) for NNLS produces the iteration
\begin{equation}
    \label{eq:nnls-pgd}
    x^+ \gets x - tA^T(Ax-b)
\end{equation}

\noindent where we pick step size $t = 1/\|A\|^2$.
Starting with $x=0$ and iterating $250$ times yields the primal feasible point $\hat{x} \doteq \begin{bmatrix}0 & 0 & 0.9282 & 0 & 0.5409\end{bmatrix}^T$.\footnote{We use $\doteq$ to denote equality up the number of digits shown.}

We now find a dual feasible point $\hat{\nu}$ with the dual line search of Section \ref{sec:dual-line-search}.
First we find a strictly dual feasible point $\nustrict$ via the LP \eqref{eq:strictly-dual-feasible-lp}.
Because it makes the numbers more presentable on paper, we opt to rescale the solution $\nustrict$ so that $\|\nustrict\|_1=1$, which gives $\nustrict \doteq \begin{bmatrix} 0.56 & 0.34 & 0.1\end{bmatrix}^T$.
Then we perform the dual line search \eqref{eq:dual-line-search} with $\nustrict$ and $\nu' = A\hat{x}-b$, giving us the dual feasible point $\hat{\nu} \doteq \begin{bmatrix} 0.1387 & 0.0552 & 0.0209 \end{bmatrix}^T$.
Together $\hat{x}$ and $\hat{\nu}$ produce the duality gap $\epsilon \doteq 0.0069$.

If we instead found $\hat{\nu}$ via the orthogonal projection subproblem \eqref{eq:orth-proj}, we would find the improved duality gap $\epsilon \doteq 0.0013$.
But recall that the orthogonal projection subproblem is closely related to the NNLS dual problem and is computationally expensive to solve.
Avoiding this expense is precisely the motivation for the dual line search, and we see for this example that the dual line search is not terribly worse than the orthogonal projection.

We are now ready to solve the feature elimination subproblem \eqref{eq:nnls-basic-feat-elim} once for each of the five columns of $A$.
Using the closed-form solution given in Section \ref{sec:safe-basic}, we find the following lower bounds on $A^T\nu^\ast$: $\begin{bmatrix} -0.34 & 0.17 & -0.49 & 0.26 & -0.61 \end{bmatrix}^T$.
The lower bounds for $\langle a_2, \nu^\ast\rangle $ and $\langle a_4, \nu^\ast\rangle$ are strictly positive, so we can eliminate them from the problem; the lower bounds for the remaining columns are non-positive, so the test is inconclusive.
The reduced matrix is
$$
A_\text{red} = \begin{bmatrix}
    \phantom{-}1 & -1           & \phantom{-}0\\
    -2           & \phantom{-}1 & \phantom{-}2\\
    \phantom{-}3 & \phantom{-}4 & -5\\
\end{bmatrix}
,
$$

\noindent which is overdetermined and full-rank.
We can therefore certify that the original NNLS problem has a unique solution.
In fact, only $206$ iterations of PGD are required to certify uniqueness, though this is unknown \emph{a priori}.

With the solution certified to be unique, we can bound the distance from $\hat{x}$ to the unique solution $x^\ast$ via the strong convexity of the reduced primal problem:
\begin{equation}
    \label{eq:nnls-dist-bound}
    \|\hat{x} - x^\ast\|^2 \le \dfrac{2}{\sigma_\text{min}(A_\text{red})^2} \left(f(A\hat{x}) - f(Ax^\ast)\right) \le \dfrac{2}{\sigma_\text{min}(A_\text{red})^2}\epsilon \doteq 0.066.
\end{equation}

\subsection{An Alternative Method to Certify Uniqueness}
\label{ssec:nnls-unique-alternative}

Slawski and Hein, as part of their analysis of NNLS problems in \cite{slawski2013nonnegative}, prove a lemma on the uniqueness of NNLS solutions.
Their result is very similar to existing results for $\ell_1$-regularized least-squares and related problems \cite{tibshirani2013lasso,zhang2015necessary}.
The lemma relies on a strong assumption on the columns of $A$, but provides a simple condition to certify the uniqueness of a solution.
We discuss this condition first, state their lemma, and finally discuss how to use their lemma to certify uniqueness in practice.

For an index set $\mathcal{J}\subseteq \{1,\ldots,n\}$, we denote by $A_\mathcal{J}$ the submatrix of $A$ formed by taking column $j$ for $j\in\mathcal{J}$.
The columns of the matrix $A\in\reals^{m\times n}$ are said to be in general linear position (GLP) in $\reals^m$ if the following condition holds:
\begin{equation}
    \label{super:eq:glp}
    \forall \mathcal{J} \subseteq \{1,\ldots,n\},\,|\mathcal{J}| = \min\{m,n\}, \, \forall x\in\reals^{|\mathcal{J}|}, A_\mathcal{J}x = 0 \implies x = 0.
\end{equation}

\noindent In other words, every subset of $\min\{m,n\}$ columns is linearly-independent.
For brevity, we will say ``$A$ is in GLP'' to mean ``the columns of $A$ are in GLP''.
It is easy to see that $A$ in GLP implies that $A$ is full-rank, but the converse is not true: GLP is strictly stronger than full-rank.
$A$ being in GLP is also related to the spark of $A$, where $\operatorname{spark}(A)$ is defined in \cite{donoho2003optimally} to be minimum number of columns that form a linearly dependent set.
If $m < n$, then $A$ is in GLP iff $\operatorname{spark}(A) = m+1$.

Unlike computing the rank of a matrix, computing the spark of $A$ and determining if $A$ is in GLP may be prohibitively difficult in the worst case.
The straightforward computation to determine if $A$ is in GLP requires computing combinatorially many determinants.
Indeed, determining if $A$ is in GLP (equivalently, if $\spark(A) = m+1$) is \textsc{coNP-complete} \cite{alexeev2012full,tillmann2014computational}; computing $\spark(A)$ is \textsc{NP-hard} in general \cite{tillmann2014computational}.
So numerically verifying that $A$ is in GLP is likely intractable except for very small $A$.

But these are worst-case results, when we know nothing about the matrix $A$; there are matrices that are known to be in GLP or have known spark.
For example, if the entries of $A\in\reals^{m\times n}$ are drawn i.i.d. from an absolutely continuous distribution, then $A$ is in GLP with probability one \cite{tibshirani2013lasso}.
Though it is complex, another example is ${\textstyle A = \begin{bmatrix} I_n & F_n\end{bmatrix}}$ where $I_n$ is the $n\times n$ identity matrix and $F_n$ is the $n\times n$ discrete Fourier transform matrix.
When $n$ is a perfect square, the spark is known to be exactly $2\sqrt{n}$, and hence it is not in GLP for $n>1$ \cite{donoho2003optimally}.

There are also lower bounds for $\operatorname{spark}(A)$ \cite{donoho2003optimally,tropp2004greed}.
One such bound is $\spark(A) > 1/\mu(A)$, where
$$
\mu(A) = \max_{i\neq j} \dfrac{|\langle a_i, a_j\rangle|}{\|a_i\|\|a_j\|}
$$

\noindent is called the coherence parameter of $A$.
For the $A\in\reals^{1681\times 2822}$ from our microscopy example in Subsection \ref{ssec:microscopy-unique}, we have $\mu(A) \approx 0.99$ which gives the uninformative bound $\spark(A) \ge 2$.

Assuming we know that $A$ is in GLP (e.g., if $A$ is drawn with entries from a continuous distribution like the standard normal distribution), the following lemma from \cite{slawski2013nonnegative} gives a simple condition implying the uniqueness of the NNLS solution.

\begin{lemma}[Lemma 5 from \cite{slawski2013nonnegative}]
    \label{lem:glp-uniqueness}
    Let the columns of $A\in\reals^{m\times n}$, $m<n$, be in GLP.
    If the NNLS optimal value is strictly positive,
    $$
    p^\ast = \min_{x\ge 0} \dfrac{1}{2} \|Ax - b\|^2 > 0,
    $$

    \noindent then the NNLS problem has a unique solution.
    Furthermore there are at most $m-1$ non-zero values in the solution.
\end{lemma}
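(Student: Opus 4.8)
The plan is to combine the complementary-slackness (KKT) conditions with the GLP hypothesis to bound the support of \emph{every} optimal point by $m-1$, and then to invoke GLP once more to collapse the set of optimal points to a single point.

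The first step is to pin down the residual. By Lemma~\ref{lem:primal-optimal-to-nu}, for any primal optimal $x^\ast$ the vector $\nu^\ast = \nabla f(Ax^\ast) = Ax^\ast - b$ is \emph{the} unique dual optimal point; in particular $Ax^\ast = \nu^\ast + b$ takes the same value for every optimal $x^\ast$. Since $p^\ast = \tfrac12\|Ax^\ast - b\|^2 = \tfrac12\|\nu^\ast\|^2$ and $p^\ast > 0$ by hypothesis, the residual $\nu^\ast$ is nonzero; this is the only place the assumption $p^\ast>0$ enters, and it is what upgrades a trivial rank bound to a strict one below.

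Next, fix an optimal $x^\ast$ with support $S = \{i : x_i^\ast > 0\}$. Complementary slackness \eqref{eq:kkt-cs} gives $\langle a_i, \nu^\ast\rangle = \{A^T\nu^\ast\}_i = 0$ for all $i \in S$, i.e.\ $A_S^T\nu^\ast = 0$; since $\nu^\ast \neq 0$, the vector $\nu^\ast$ lies in the orthogonal complement of $\operatorname{ran} A_S$, which forces $\operatorname{rank}(A_S) \le m-1$. If we had $|S|\ge m$, then choosing any $\mathcal{J}\subseteq S$ with $|\mathcal{J}| = \min\{m,n\} = m$ (possible since $m<n$), GLP \eqref{super:eq:glp} would make the columns of $A_\mathcal{J}$ linearly independent and hence $\operatorname{rank}(A_S) \ge \operatorname{rank}(A_\mathcal{J}) = m$, a contradiction. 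So $|S| \le m-1$ for every optimal point, which already yields the ``at most $m-1$ nonzeros'' assertion.

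For uniqueness, suppose $x^1, x^2$ are both optimal, with supports $S_1, S_2$. Since $x^1, x^2 \ge 0$, the midpoint $x = \tfrac12(x^1 + x^2)$ is optimal with support exactly $S_1\cup S_2$ (a nonnegative sum vanishes in a coordinate iff both terms do), so the previous step gives $|S_1\cup S_2| \le m-1$. On the other hand $Ax^1 = Ax^2$ (both equal $\nu^\ast + b$), so $x^1 - x^2 \in \ker A$ and is supported on $S_1\cup S_2$; since $|S_1\cup S_2| \le m-1 < \min\{m,n\}$, GLP makes the columns of $A_{S_1\cup S_2}$ linearly independent, so $A_{S_1\cup S_2}$ is injective, forcing $x^1 = x^2$. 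The one step that needs care is this last one: one should resist bounding $|S_1|$ and $|S_2|$ separately, since their union could a priori exceed $m$ and defeat the GLP argument; applying the support bound to the convex combination, whose support is precisely $S_1\cup S_2$, is what makes it go through. Everything else is routine unpacking of the KKT conditions and the definition of GLP.
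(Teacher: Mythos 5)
Your proof is correct. Note that the paper itself does not prove this statement; it is quoted verbatim as Lemma~5 of \cite{slawski2013nonnegative} and used as a black box, so there is no in-paper argument to compare against. Your argument is essentially the standard one from that source: $p^\ast>0$ forces the (unique, by Lemma~\ref{lem:primal-optimal-to-nu}) residual $\nu^\ast$ to be nonzero, complementary slackness puts every active column in the hyperplane $\{\nu^\ast\}^\perp$, and GLP then caps the support at $m-1$; the midpoint trick correctly handles the union of two supports before invoking injectivity of $A_{S_1\cup S_2}$, which is exactly the step that would break if you bounded $|S_1|$ and $|S_2|$ separately. The only point worth making explicit is that GLP as stated in \eqref{super:eq:glp} concerns subsets of size exactly $\min\{m,n\}$, so for $|S_1\cup S_2|\le m-1$ you should extend to a superset of size $m$ and use that subsets of linearly independent sets are linearly independent --- a one-line remark, not a gap.
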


For underdetermined NNLS problems with $A$ in GLP, we can certify uniqueness simply by certifying $p^\ast > 0$.
Assuming we know that $A$ is in GLP, this is simple to check and certify in practice, including when using a first-order method that produces only primal points.
We can produce a dual feasible point $\hat{\nu}$ from a primal feasible point $\hat{x}$ using the dual line search from Section \ref{sec:dual-line-search}.
If we have that $g(\hat{\nu}) > 0$, then $p^\ast > 0$ by weak duality and the solution is certified to be unique.
But of course if $A$ is not known to be in GLP we cannot invoke Lemma \ref{lem:glp-uniqueness}.

The small example problem in the previous subsection has $A$ in GLP, which can be checked directly since it is so small.
It takes $286$ iterations of PGD to certify that $p^\ast > 0$, which is slightly more than the $206$ iterations needed for SAFE to certify uniqueness.

To certify uniqueness using safe feature elimination, there must be at least ${n-m}$ zero features with strict complementarity.
If this condition does not hold, safe feature elimination will never certify uniqueness.
If $A$ is in GLP and ${p^\ast > 0}$, then one can certify uniqueness using Lemma \ref{lem:glp-uniqueness}.
But notice from Lemma \ref{lem:glp-uniqueness} that under such conditions, there are at least $n-m+1$ zero features in the solution; so safe feature elimination will also certify uniqueness, provided the solution exhibits enough strict complementarity.
Even if feature elimination fails to certify uniqueness, it still provides certificates that features are not present in the solution.
This is a positive result, whereas Lemma \ref{lem:glp-uniqueness} provides no additional benefit when it fails to certify uniqueness.

\section{Certifying NNLS Solution Uniqueness - Examples}
\label{sec:safe-nnls-examples}

\subsection{Synthetic Data Examples}
\label{ssec:safe-nnls-synthetic}

Let us now see how safe feature elimination performs on a larger synthetic example.
We construct a random NNLS problem by drawing a random $50\times 100$ matrix $A$ and $50\times 1$ RHS $b$ each with entries drawn i.i.d. from the standard normal distribution $\mathcal{N}(0,1)$.
Such an NNLS problem may not necessarily have a unique solution.
To check if it does we find a high-accuracy solution using {\sc matlab}'s \texttt{lsqnonneg}, which implements an active set method from \cite{lawson1995solving}.
Using the numerically optimal solution and noting that $A$ is in GLP with probability one, we check if $p^\ast > 0$ to certify that the solution is unique.
If it not certified to be unique, we draw another random NNLS problem until we have a problem with a unique solution.

Although it is outside the scope of the present work, it is interesting to note that the uniqueness of the solution to random NNLS problems of this form appears to depend sharply on the shape of $A$.
If $m > n/2$, the solution appears to be unique with high probability for large $m,n$; if $m < n/2$, the solution appears to be non-unique with high probability for large $m,n$.
``Phase transitions'' of a similar form are analyzed in \cite{amelunxen2014living} and it seems quite possible to extend their results to random NNLS problems of the form used here.

In preparation for the dual line search, we find a strictly dual feasible point $\nustrict$ by solving the LP \eqref{eq:strictly-dual-feasible-lp} once and precomputing $A^T\nustrict$.
We run $7500$ iterations of projected gradient descent starting with $x = 0$, with each iteration giving a primal feasible point $\hat{x}$.
At each iteration we use the dual line search \eqref{eq:dual-line-search} to construct a dual feasible point $\hat{\nu}$.
Performing the dual line search requires computing $A^T\hat{\nu}$ plus $\mathcal{O}(n)$ work, which is on the order of a single gradient evaluation.
Using $\hat{x}$ and $\hat{\nu}$, we find the duality gap and use the feature elimination subproblem \eqref{eq:nnls-basic-feat-elim} to eliminate features.

Figure \ref{fig:synthetic-nnls-uniqueness} shows the result of using feature elimination to certify the uniqueness of the random NNLS problem.
After about $1500$ iterations, the duality gap is small enough that feature elimination has started to eliminate features.
Just after $3000$ iterations, sufficiently many features are eliminated to certify uniqueness ($A$ in GLP implies that the reduced matrix $A_\text{red}$ is full-rank, but we can also verify this numerically).
In accordance with Corollary \ref{cor:feat-elim-works-eventually} we find that SAFE eventually eliminates all zero features.

For comparison, we also use SAFE with the orthogonal projection \eqref{eq:orth-proj} instead of the dual line search.
Solving the orthogonal projection subproblem at each step of projected gradient descent is tractable for this small problem, but is impractical for larger problems.
We see that the dual line search, which scales well to large problems, performs only a bit worse than the orthogonal projection.

Figure \ref{fig:synthetic-nnls-uniqueness} also shows that Lemma \ref{lem:glp-uniqueness} certifies uniqueness for quite a large duality gap.
For the problem used for Figure \ref{fig:synthetic-nnls-uniqueness}, $\nustrict$ is sufficient to certify $p^\ast > 0$, which certifies uniqueness before even the first iteration.
While impressive, this is not ``for free'' since we still solve the LP \eqref{eq:strictly-dual-feasible-lp} to find $\nustrict$.
For other instances more PGD iterations are required, but it is typical for these problems that $p^\ast > 0$ is certified before SAFE has eliminated sufficiently many features.
After SAFE has certified uniqueness, the reduced primal problem is strongly convex, which allows us to bound the distance from the primal iterate $\hat{x}$ to the true solution $x^\ast$ \emph{\`a la} \eqref{eq:nnls-dist-bound}.
Though not always possible (unlike a bound on the duality gap, which we can always find), this provides quite strong information relating the iterate $\hat{x}$ to the optimal point $x^\ast$.

\begin{figure}[h!]
    \centering
    \includegraphics[width=\textwidth]{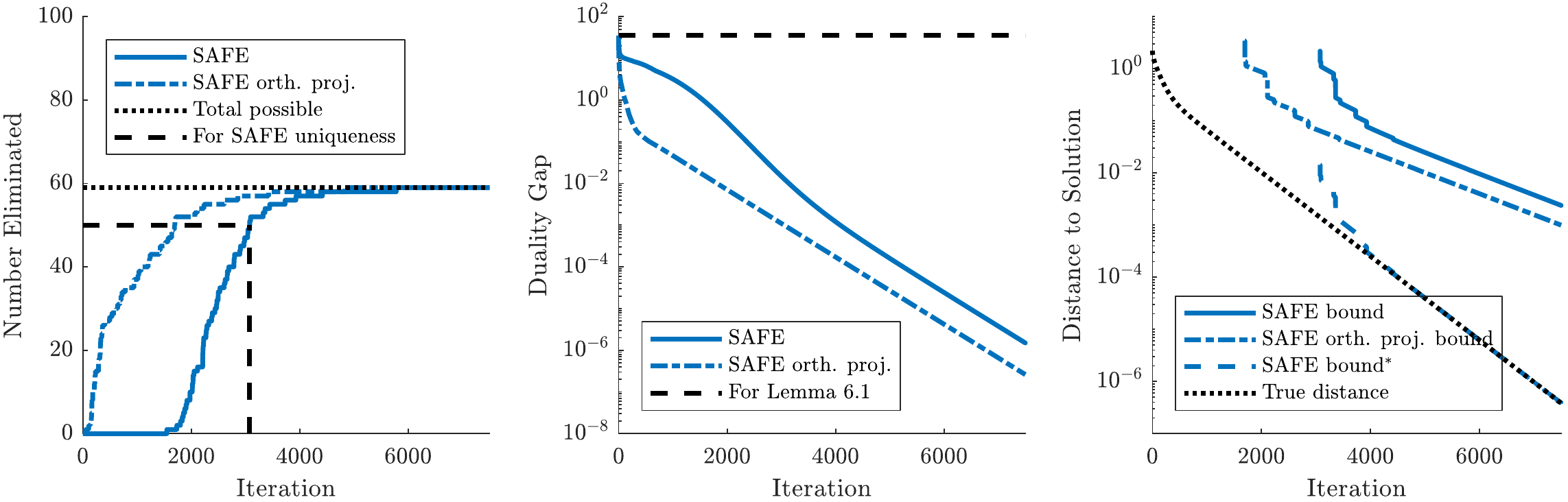}
    \caption[Certifying uniqueness for a synthetic NNLS problem]{Certifying uniqueness for a synthetic ${50\times 100}$ NNLS problem.
    The dashed line in the left figure shows the minimum number of eliminated features to certify uniqueness; the dotted line shows the maximum number of features that can be eliminated; the dash-dot line shows SAFE using the orthogonal projection \eqref{eq:orth-proj} instead of the dual line search.
    In the middle figure, the dashed line shows the duality gap when $p^\ast>0$ is certified and Lemma \ref{lem:glp-uniqueness} can be invoked.
    The right figure shows the bound \eqref{eq:nnls-dist-bound} on the distance from $\hat{x}$ to the optimal point $x^\ast$.
    The line labeled SAFE bound$^*$ uses $f(A\hat{x}) - f(Ax^\ast)$ in place of the duality gap $\epsilon = f(A\hat{x}) - g(\hat{\nu})$ in the bound \eqref{eq:nnls-dist-bound} (i.e., uses only the first inequality of \eqref{eq:nnls-dist-bound}).
    Though this requires knowledge of $x^\ast$, this shows that the slower convergence of the bound \eqref{eq:nnls-dist-bound} (which we can compute without knowledge of $x^\ast$ or $\nu^\ast$) is due to the suboptimality of $\hat{\nu}$.}
    \label{fig:synthetic-nnls-uniqueness}
\end{figure}

\subsection{Microscopy Uniqueness Example}
\label{ssec:microscopy-unique}

We now consider a challenging set of NNLS problems arising from a new technique in super-resolution fluorescence microscopy.
In this instance, the image formation process involves solving $40000$ NNLS problems each using the same matrix $A$ but different RHS $b$.
The solutions of the NNLS problems are then assembled into the final image.
We refer the reader to \cite{yu2018achieving} for details on the microscope and NNLS problem setup.

It is natural to ask if the final, super-resolved image is uniquely determined given the data.
If each of the $40000$ NNLS problems has a unique solution, then the final image is unique.
We answer that question in the affirmative by using feature elimination to certify the uniqueness of each NNLS problem.
In fact these microscopy problems motivated our development of feature elimination, in particular developing them to work with just a primal feasible point $\hat{x}$ coming from a first-order method.
Note that we do not know \emph{a priori}, and cannot verify numerically, if $A$ is in GLP, so we cannot use Lemma \ref{lem:glp-uniqueness}.

The matrix $A$ is $1681\times 2822$ and has $2$-norm condition number $\kappa_2(A) = 2.4\times 10^{20}$ (computed using \texttt{dgesvj} compiled to use quadruple precision \cite{anderson1999lapack}).
Even though these NNLS problems are extremely ill-conditioned, first-order methods are well-suited to solve them since each NNLS problem uses the same $A$.
This structure allows us to combine gradient computations for many RHS into matrix-matrix products with $A$ and $A^T$, instead of repeated matrix-vector products with $A$ and $A^T$.
High-performance matrix-matrix product implementations take advantage of modern hierarchical memory computers to achieve higher performance than repeated matrix-vector products \cite{golub1998matrix,goto2008anatomy}.
The result is an order of magnitude speedup in the gradient evaluation time (throughput, specifically).
For further improved speed, we implement an optimal/accelerated first-order method from \cite{auslender2006interior} instead of using PGD; we will refer to this method as AT.
We include many of the implementation tricks from TFOCS \cite{becker2011templates}, including an adaptive step size selection method.
Our implementation uses a GPU for the matrix-matrix products and array operations in the iteration, leading to further improved runtime.

Like the example in Subsection \ref{ssec:safe-nnls-synthetic}, we will iterate AT for some number of iterations, then stop and perform feature elimination.
We perform the dual line search using $\nustrict = \max\{0,\nu'\}$, as mentioned in Subsection \ref{ssec:finding-strictly-dual-feasible-point}.
Using the closed-form solution to the dual line search requires computing $A^T\hat{\nu}$ and $A^T\nustrict$, which is on the order of the cost of a gradient evaluation.
That is to say the dual line search is not terribly expensive, though we do not generally want to do it after each iteration of AT.

Table \ref{tab:feat-elim} shows the results of using the strong concavity subproblem \eqref{eq:feat-elim-strong-concavity} (we also show results for subproblem \eqref{eq:feat-elim-strong-concavity-dual-feas-dome-hybrid}, which we discuss shortly).
We show the number of iterations of AT, the total number of solutions certified to be unique, and the number of features eliminated across all NNLS problems.
There are $40000$ NNLS problems, each with $2822$ features, giving approximately $113$ million features total.
As the accuracy of the primal feasible points $\hat{x}$ increases, the duality gap closes and more features are eliminated from the problem.
But even at 500K iterations, not all problems are certified to have a unique solution.

\begin{table}[ht!]
    \centering
\sisetup{
    table-format = 5.0,
    detect-weight= true,
    group-separator={,}
}
\robustify\bfseries 
    \begin{tabular}{lSScc}
        \toprule
        \multicolumn{1}{l}{} & \multicolumn{2}{c}{Solutions Certified Unique} & \multicolumn{2}{c}{Features Eliminated}\\
        \cmidrule(lr){2-3} \cmidrule(l){4-5}
        Iterations & {using \eqref{eq:feat-elim-strong-concavity}} & {using \eqref{eq:feat-elim-strong-concavity-dual-feas-dome-hybrid}} & using \eqref{eq:feat-elim-strong-concavity} & using \eqref{eq:feat-elim-strong-concavity-dual-feas-dome-hybrid} \\ 
        \midrule 
        \hphantom{0}\num{10000}  & 7237  & 9510  & $20.0\%$ & $24.8\%$ \\
        \hphantom{0}\num{50000}  & 18339 & 23174 & $46.2\%$ & $56.9\%$ \\
        \num{100000} & 23512 & 28826 & $57.9\%$ & $68.3\%$ \\
        \num{500000} & 34462 & 38094 & $81.6\%$ & $87.8\%$ \\
        \num{500000} {+ \texttt{lsqnonneg}} & {$-$} & \bfseries 40000 & $-$ & $91.0\%$ \\
        \bottomrule
    \end{tabular}
    \caption{
        Number of problems (40000 total) certified to have unique solutions using feature elimination with the strong concavity subproblem \eqref{eq:feat-elim-strong-concavity} and with the strong concavity plus partial dual feasibility subproblem \eqref{eq:feat-elim-strong-concavity-dual-feas-dome-hybrid}.
        Adding partial dual feasibility constraints in \eqref{eq:feat-elim-strong-concavity-dual-feas-dome-hybrid} can eliminate sufficiently many features to certify solution uniqueness at fewer iterations than using strong concavity alone in \eqref{eq:feat-elim-strong-concavity}.
            }
    \label{tab:feat-elim}
\end{table}

We know from Corollary \ref{cor:feat-elim-works-eventually} that we could simply perform more iterations of AT to shrink the duality gap.
But instead let us construct a stronger feature elimination subproblem, allowing us to expend a little more work in solving the new subproblem to avoid computing more iterations of AT.
We do this by introducing another constraint on \eqref{eq:feat-elim-strong-concavity} to shrink the search set $N$ while still ensuring that $\nu^\ast \in N$.
One of many ways to do this is by adding the dual feasibility constraint: $A^T\nu \ge 0$.
This has the possibility to shrink $N$, thereby increasing the feature elimination lower bound, while guaranteeing that $\nu^\ast\in N$.
Thus this leads to a stronger but still safe subproblem.

But the resulting feature elimination subproblem is too difficult to solve for our purposes.
We relax the subproblem by enforcing dual feasibility for only a single column at a time with $a_j^T\nu \ge 0$.
Since we can pick any $j\in\{1,\ldots,n\}$, we solve the subproblem for each column and take the largest lower bound:
\begin{equation}
    \label{eq:feat-elim-strong-concavity-dual-feas-dome-hybrid}
    \begin{array}{lll}
        \text{max}_{1\le j\le n} & \min_\nu & \langle a_i, \nu\rangle\\
                                 & \text{s.t.} & \|\nu-\hat{\nu}\|^2 \le 2\epsilon\\
                                 &             & \langle a_j, \nu\rangle \ge 0.
    \end{array}
\end{equation}

\noindent Each inner problem is a ``dome subproblem'', since the feasible set is the intersection of a ball and a halfspace.
A closed-form solution to the dome subproblem exists (see \cite{ghaoui2012safe} for instance), allowing us to compute the optimal value cheaply and accurately.
The dome subproblem optimal value uses the inner products $\langle a_j,\hat{\nu}\rangle$ and $\langle a_i, a_j\rangle$.
With all required inner products computed, evaluation of the optimal value requires $\mathcal{O}(1)$ work.

Observe that we can compute the all the required inner products for all dome subproblems as $A^T\hat{\nu}$ and $A^TA$.
Since $A$ is fixed we precompute $A^TA$ and discard this one-time cost.
This brings the cost of computing the optimal value of \eqref{eq:feat-elim-strong-concavity-dual-feas-dome-hybrid} to about the cost of a gradient evaluation plus $\mathcal{O}(n)$ work for $n$ evaluations of the dome subproblem optimal value.

Table \ref{tab:feat-elim} shows the analogous results when using the strong concavity and partial dual feasibility subproblem \eqref{eq:feat-elim-strong-concavity-dual-feas-dome-hybrid}.
We see a marked improvement in the number of solutions certified to be unique, though we still fall a bit short of certifying uniqueness for all 40000 problems.
This appears to be due to a few particularly slow-to-converge problems where the accuracy of $\hat{x}$ is still quite low.
We fix this by computing a high-accuracy solution for the remaining 1996 problems using {\sc matlab}'s \texttt{lsqnonneg} (note that we do not use \texttt{lsqnonneg} for the 5538 remaining problems when using subproblem \eqref{eq:feat-elim-strong-concavity}).
This results in a sufficiently accurate $\hat{x}$ and we certify the remaining problems as having unique solutions.
The final image is constructed by assembling the individual NNLS solutions, so by certifying that all NNLS solutions are unique we also guarantee that the final image is uniquely determined from the data.


\section{Conclusions}

We have developed a safe feature elimination strategy for non-negativity constrained convex optimization problems which uses an accurate, but non-optimal, primal-dual feasible pair.
We show that under reasonable conditions, a sufficiently accurate primal-dual pair will eliminate all zero coordinates from the problem.
To enable our methods to work with optimization algorithms that produce only primal points we also developed a dual line search to construct an accurate dual feasible point from an accurate primal feasible point.
This allows us to use a first-order method to solve the primal, use the dual line search to cheaply construct a dual feasible point, and then use SAFE to eliminate features.
We demonstrate the use of SAFE to robustly certify the uniqueness of a non-negative least-squares solution in a small synthetic data example and also for a large-scale, extremely ill-conditioned problem set arising from a microscopy application.
Once an NNLS solution has been certified unique, safe feature elimination also provides a bound on the distance to the unique optimal point.
Possible future directions of this work include strengthening the feature elimination subproblems and dual line search, and extending the uniqueness certification technique to $1$-norm regularized problems like lasso.

\section{Acknowledgements}
Stephen Becker acknowledges the donation of a Tesla K40c GPU from NVIDIA.

\bibliographystyle{amsalpha}
\newcommand{\etalchar}[1]{$^{#1}$}
\providecommand{\bysame}{\leavevmode\hbox to3em{\hrulefill}\thinspace}
\providecommand{\MR}{\relax\ifhmode\unskip\space\fi MR }
\providecommand{\MRhref}[2]{%
  \href{http://www.ams.org/mathscinet-getitem?mr=#1}{#2}
}
\providecommand{\href}[2]{#2}

\end{document}